\newtheorem{cor}{Corollary}[section]
\newtheorem{te}[cor]{Theorem}
\newtheorem{p}[cor]{Proposition}
\newtheorem{lemma}[cor]{Lemma}
\theoremstyle{definition}
\newtheorem{de}[cor]{Definition}
\theoremstyle{remark}
\newtheorem{ob}[cor]{Observation}
\newtheorem{ex}[cor]{Example}
\newtheorem{nt}[cor]{Notation}
\newcommand{\cz}{\mathbb{C}}
\newcommand{\nz}{\mathbb{N}}
\newcommand{\zz}{\mathbb{Z}}
\newcommand{\rz}{\mathbb{R}}
\newcommand{\ff}{\mathbb{F}}
\newcommand{\bb}{\mathcal{B}}
\newcommand{\gr}{\mathcal{G}}
\newcommand{\hh}{\mathcal{H}}
\newcommand{\tr}{\mathcal{T}}
\newcommand{\pp}{\mathcal{P}}
\newcommand{\lron}{\mathcal{L}}
\newcommand{\nr}{\mathcal{N}}
\newcommand{\vp}{\varphi}
\newcommand{\ve}{\varepsilon}
\newcommand{\es}{\emptyset}
\newcommand{\sm}{\setminus}
\providecommand{\Z}{\mathbb{Z}} \providecommand{\N}{\mathbb{N}}
\DeclareMathOperator{\id}{id}
\def\tilde{\widetilde}
\def\PSL{\mathop{\rm PSL}\nolimits}
\def\PGL{\mathop{\rm PGL}}
\def\Z{\mathbb Z}
\def\cR{{\mathcal R}}
\def\N{\mathbb N}
\def\cR{{\mathcal R}}
\begin{document}

\title{A generalisation to Birkhoff - von Neumann theorem}

\author{Liviu P\u aunescu}
\address[L. P\u aunescu]{Institute of Mathematics of the Romanian Academy, 21 Calea Grivitei Street, 010702 Bucharest, Romania}
\email{liviu.paunescu@imar.ro}
\author{Florin R\u adulescu}
\address[F. R\u adulescu]{Institute of Mathematics of the Romanian Academy, 21 Calea Grivitei Street, 010702 Bucharest, Romania and University of Rome 2, Tor Vergata, Rome 00185 Italy, Visiting 02-08.2015, Dept of Mathematical Sciences,
Universitetspark 5
2100 Copenhagen  Denmark}
\email{radulesc@mat.uniroma2.it}
\thanks{Work supported by grant number PN-II-ID-PCE-2012-4-0201 of the Romanian Ministry of Education, CNCS-UEFISCDI. L.P. was partially supported by the Austria-Romania research cooperation grant GALS on Sofic groups. F.R was supported in part by PRIN-MIUR and GNAMPA-INdAM}

\begin{abstract}
The classic Birkhoff- von Neumann theorem states that the set of doubly stochastic matrices is the convex hull of the permutation matrices. In this paper, we study a generalisation of this theorem in the type $II_1$ setting. Namely, we replace a doubly stochastic matrix with a collection of measure preserving partial isomorphisms, of the unit interval, with similar properties. We show that a weaker version of this theorem still holds.
\end{abstract}
\maketitle

A matrix $a\in M_m(\rz_+)$ with positive real entries is \emph{doubly stochastic} if $\sum_ka(i,k)=1=\sum_ka(k,j)$ for every $i,j$, while a \emph{permutation matrix} is such a matrix with only one non-zero entry on each row and column.

The classic Birkhoff- von Neumann theorem can be restated as follows. Let $P_m\subset M_m(\rz_+)$ be the set of permutation matrices and define $B_m^n=\{a\in M_m(\nz):\sum_ka(i,k)=n=\sum_ka(k,j)\mbox{ for every }i,j\}$. Then for each $a\in B_m^n$ there exists $p\in P_m$ such that $p(i,j)\leqslant a(i,j)$, equivalently $a-p\in M_m(\nz)$. It follows, by an easy induction, that each $a\in B_m^n$ is a sum of $n$ permutation matrices.

Various possible generalisations of this theorem had been investigated over the years.
For the infinite discrete case, consider the functions $a:\nz\times\nz\to\rz^+$. The notions of \emph{doubly stochastic} and \emph{permutation matrices} can be defined in a similar way. The theorem still holds in this case, if we ask uniform convergence for the sums $\sum_ka(i,k)$ and $\sum_ka(k,j)$. Details can be checked in \cite{Is}.

The infinite continuum case is more challenging. Let $I=[0,1]$ endowed with the Lebesgue measure $\mu$. One defines a \emph{doubly stochastic measure} as a probability measure $\lambda$ on $I^2$ such that $\lambda(A\times I)=\lambda(I\times A)=\mu(A)$. The analogue of a permutation matrix is a measure concentrated on the graph of a measure preserving automorphism of $I$. There are extreme doubly stochastic measures that are not of this type. In \cite{Lo}, two such examples are constructed. The first one is supported on a union of two non-$\mu$ preserving automorphisms of $I$, while the second is not supported on a countable union of automorphisms. Other constructions of such extreme doubly stochastic measures can be found in \cite{Li} or \cite{Se-Sh}. The article \cite{Br} contains a functional analysis study of the set of doubly stochastic measures.

In this paper we deal only with doubly stochastic measures that are supported on a countable union of measure preserving automorphisms. We restrict our study to maps $a:E\to\nz$, where $E$ is a countable measure-preserving equivalence relation and $\sum_{zEx}a(x,z)=n=\sum_{zEy}a(z,y)$.

\section{Introduction}

\subsection{Preliminaries}

Let $(X,\mathcal{B},\mu)$ be a standard probability space, and $E\subset X^2$, a countable, measure preserving equivalence relation. Recall the \emph{full group} of $E$ and the \emph{pseudo-group of partial isomorphisms}:
\begin{align*}
[E]=&\{\theta:X\to X:graph(\theta)\subset E,\mbox{ measurable bijection}\}\\
[[E]]=&\{\vp:A\to B:A,B\subset X,\ graph(\vp)\subset E,\mbox{ measurable bijection}\}
\end{align*}

In this article, elements in $[E]$ should be considered \emph{generalised permutation matrices}, while elements in $[[E]]$ play the role of \emph{generalised one entries} for our doubly stochastic matrices. 

It is well-known that each such equivalence relation is a countable union of graphs of elements in $[[E]]$. 

\begin{te}(Theorem 18.10 of \cite{Ke})\label{Kechris}
Let $E$ be a countable, Borel equivalence relation. Then $E=\bigcup_{n\in\nz}F_n$, where $(F_n)_n$ are Borel graphs.
\end{te}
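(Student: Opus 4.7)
The theorem is a standard descriptive set theory fact (essentially Lusin--Novikov uniformization), so my plan will consist in invoking the right general theorem and then squeezing out the "bijection'' form of the conclusion.

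The central tool I would use is the Lusin--Novikov uniformization theorem: if $B\subseteq X\times Y$ is a Borel subset of a product of standard Borel spaces whose vertical sections $B_x=\{y:(x,y)\in B\}$ are all countable, then $B=\bigcup_{n\in\nz}\text{graph}(f_n)$ where each $f_n\colon A_n\to Y$ is a Borel function with Borel domain $A_n\subseteq X$. I would apply this to $B=E\subseteq X\times X$; the hypothesis that $E$ is a countable equivalence relation is exactly the statement that every vertical section $E_x=[x]_E$ is countable, so Lusin--Novikov directly yields $E=\bigcup_n \text{graph}(f_n)$ with Borel partial functions $f_n$.

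Depending on what the authors mean by ``Borel graph'' in the statement (they will use the $F_n$ as elements of $[[E]]$ later, i.e.\ partial isomorphisms), I would upgrade the $f_n$ to partial \emph{bijections}. For this, I note that each $\text{graph}(f_n)\subseteq X\times X$ also has countable horizontal sections (they are contained in the vertical sections of $E$ via the flip $\sigma(x,y)=(y,x)$), so applying Lusin--Novikov a second time to $\sigma(\text{graph}(f_n))$ gives $\text{graph}(f_n)=\bigcup_m \text{graph}(g_{n,m}^{-1})$ with Borel partial functions $g_{n,m}$. Now $g_{n,m}^{-1}$ is a function on a Borel domain, and intersecting with $\text{graph}(f_n)$ ensures the composition is injective, so each piece is the graph of a Borel partial bijection. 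Re-indexing the doubly indexed family produces the desired single sequence $(F_n)_n$.

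The only non-trivial content is Lusin--Novikov itself, whose standard proof uses a Borel linear ordering on $Y$ and selects, for each $x$, the least element $y$ with $(x,y)\in B$; iterating and using Borelness of the selection map builds the $f_n$. I would regard this as the main technical obstacle, but since it is a cornerstone theorem of descriptive set theory (and this is how Kechris's Theorem 18.10 is proved), I would simply cite it rather than reproducing its proof. A possible shortcut, if one prefers a measure-theoretic viewpoint compatible with later sections of the paper, is to invoke the Feldman--Moore theorem instead, which directly gives $E=\bigcup_n \text{graph}(\theta_n)$ with $\theta_n\in[E]$; the $\text{graph}(\theta_n)$ are a fortiori Borel graphs, and this formulation has the added bonus of producing full (not merely partial) automorphisms.
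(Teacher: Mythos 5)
Your proposal is correct and matches the paper's treatment: the paper gives no proof at all, merely citing Theorem 18.10 of Kechris, which is precisely the Lusin--Novikov uniformization theorem you invoke. Your additional remarks on upgrading the pieces to partial bijections (or appealing to Feldman--Moore instead) are sound and consistent with how the result is actually used later in the paper, where the $F_n$ are treated as graphs of elements of $[[E]]$.
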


The \emph{counting measure} on $E$ is a useful tool in this paper. For a Borel subset $C\subset E$ define:
\[\nu(C)=\int_XCard\big(C\cap(\{x\}\times X)\big)d\mu(x)=\int_XCard\big(C\cap(X\times\{x\})\big)d\mu(x).\]

The last equality is due to the measure preserving property of $E$, and the terms in that equality are called the \emph{right} and \emph{left counting measures}.

We denote by $\chi(A)$ or $\chi_A$ the characteristic function of $A$, by $A^c$ the completent in $X$ of $A$, and $A\Delta B$ is the symmetric difference of sets $A$ and $B$. Also $flip:X^2\to X^2$ is defined as $flip(x,y)=(y,x)$. For $\vp\in[[E]]$ the set $graph(\vp)\subset E$ is defined as $\{(x,\vp(x)):x\in dom(\vp)\}$.

\subsection{Basic definitions} We now define the main object of study of this paper.

\begin{de}
A \emph{doubly stochastic element}, for short a DSE, \emph{of multiplicity n}, is a collection of elements in $[[E]]$, $\Phi=\{\vp_i:A_i\to B_i:i\}$, such that $\sum_i\chi_{A_i}=n\cdot Id$ and  $\sum_i\chi_{B_i}=n\cdot Id$.
\end{de}

Alternatively we can view a DSE as a function $f:E\to\nz$.

\begin{de}\label{associated matrix}
Let $\Phi=\{\vp_i:i\}$ be a DSE of multiplicity $n$. The \emph{associated matrix} $M(\Phi):E\to\nz$, is defined as $M(\Phi)=\sum_i\chi(graph(\vp_i))$.
\end{de}

The associated matrix $M(\Phi)$ has the property that $\sum_zM(\Phi)(x,z)=n=\sum_zM(\Phi)(z,y)$ for $\mu$-almost all $x$ and $y$. Also, using Theorem \ref{Kechris}, a function with these properties can be transformer into a DSE. All in all, a DSE and its associated function is the same information. 

Due to the definition of a DSE, $M(\Phi):E\to\nz$ is a finite function, in the sense of Feldman-Moore, \cite{Fe-Mo}, see Definition \ref{finite function} below.  A  DSE can be finite or countable, depending on the number of elements in $[[E]]$ that composes it. In general, by a DSE we mean a finite DSE. We now prove that this is not a relevant restriction.

\begin{de}
Two doubly stochastic elements $\Phi=\{\vp_i:i\}$ and $\Psi=\{\psi_j:j\}$ are called \emph{equivalent} if they have the same associated matrix, i.e. $\sum_i\chi(graph(\vp_i))=\sum_j\chi(graph(\psi_j))$.
\end{de}

\begin{p}
An infinite DSE is equivalent to a finite DSE.\label{equiv to finite}
\end{p}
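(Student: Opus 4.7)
The plan is to pass to the associated integer-valued function $M(\Phi):E\to\nz$, slice it into finitely many Borel subsets of $E$ via its level sets, and then apply Theorem \ref{Kechris} to express each slice as a finite union of graphs of partial isomorphisms.

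First, since the row and column sums of $M:=M(\Phi)$ equal $n$ and its values are non-negative integers, we automatically have $M(x,y)\leq n$ for $\mu$-a.e.\ $(x,y)\in E$. Writing the telescoping sum
$$M=\sum_{k=1}^{n}\chi_{T_k},\qquad T_k:=\{(x,y)\in E:M(x,y)\geq k\},$$
reduces the problem to showing that each Borel set $T_k\subset E$, whose rows and columns each contain at most $n$ points, can be written as a finite disjoint union of graphs of partial isomorphisms. Combining such decompositions across $k=1,\ldots,n$ then produces a finite DSE $\Psi$ with $M(\Psi)=M(\Phi)$, hence equivalent to $\Phi$ in the sense of the preceding definition.

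To decompose a row- and column-bounded Borel set $T\subset E$, I would fix a Borel enumeration $E=\bigcup_m F_m$ by partial isomorphisms (Theorem \ref{Kechris}) and use it as a measurable coordinate by setting $m(x,y):=\min\{m:(x,y)\in F_m\}$. Since each $F_m$ is a graph, $m(x,\cdot)$ is injective on the row of $x$, so the rank function
$$\alpha(x,y):=1+\#\{y':(x,y')\in T,\ m(x,y')<m(x,y)\}$$
is Borel and takes values in $\{1,\ldots,n\}$, and its fibers $H_i:=\alpha^{-1}(i)$ are Borel subsets of $T$ in which each row contains at most one point. Applying the analogous rank construction column-wise inside each $H_i$ further splits it into at most $n$ Borel graphs of partial isomorphisms, so $T$ decomposes into at most $n^2$ elements of $[[E]]$.

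The step I expect to be the main obstacle is verifying measurability of the rank function $\alpha$ and its column analogue; but since $m(\cdot,\cdot)$ is Borel and the rows and columns of $T$ are finite, both are Borel by a routine counting argument, and the resulting fibers are Borel graphs (hence elements of $[[E]]$ by restricting domain and range). Collecting these partial isomorphisms across $T_1,\ldots,T_n$ yields a finite DSE $\Psi$ of cardinality at most $n^3$ whose associated matrix agrees with $M(\Phi)$, proving the proposition.
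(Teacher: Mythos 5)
Your proof is correct, and it is a genuine (if modest) reorganization of the paper's argument rather than a copy of it. The paper first produces $n$ everywhere-defined, possibly non-injective maps $\theta_j:X\to X$ with $\sum_j\chi_{graph(\theta_j)}=M(\Phi)$ --- a row-selection step it dismisses as ``easy to construct'' --- and then splits each $\theta_j$ into at most $n$ partial isomorphisms by ranking the preimages $\theta_j^{-1}(x)$ with a Borel total order, i.e.\ a single column-ranking. You instead slice $M(\Phi)$ into its level sets $T_k=\{M\geq k\}$, each with at most $n$ points per row and per column, and then rank twice, once along rows and once along columns. The underlying tool is identical in both proofs (ranking the finitely many points of a Borel fiber via a Borel order or a Luzin--Novikov enumeration and extracting the $k$-th one), but your version has the merit of making the first reduction completely explicit --- the paper's construction of the $\theta_j$ is itself a suppressed row-ranking with multiplicities --- at the cost of a slightly worse count ($n^3$ pieces versus the paper's $n^2$). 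The only point you should spell out is the column-wise rank: the coordinate $m(x,y)$ built from Theorem \ref{Kechris} is injective on rows but not necessarily on columns, so for the second splitting you should either apply Theorem \ref{Kechris} to $flip(E)=E$ or rank the at most $n$ points of each column directly by a Borel linear order on $X$; as you note, measurability of the resulting rank functions follows from the finiteness of the sections, exactly as in the paper's use of $\min\{\theta_j^{-1}(x)\}$.
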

\begin{proof}
Let $\Phi=\{\vp_i:A_i\to B_i:i\}$ be such that $\sum_i\chi_{A_i}=n\cdot Id$ and  $\sum_i\chi_{B_i}=n\cdot Id$ for some $n\in\nz^*$. It is easy to construct $\{\theta_j:X\to X:j=1,\ldots,n\}$ such that $\sum_{j=1}^n\chi_{graph(\theta_j)}=\sum_i\chi_{graph(\vp_i)}$. These maps $\theta$ need not be elements in $[E]$, i.e. they may not be injective. We show that each of these maps can be decomposed into $n$ elements in $[[E]]$.

Choose $``<"$ a Borel total ordering on $X$. Let $T_j=\theta_j(X)$. For each $x\in T_j$, $\theta_j^{-1}(x)$ is composed of at most $n$ elements. Define $S_j^1=\{min\{\theta_j^{-1}(x)\}:x\in T_j\}$ and note that $\psi_j^1:S_j^1\to T_j$, $\psi_j^1(x)=\theta_j(x)$ is an element of $[[E]]$.

Consider now $T_j^2=\{x\in T_j:Card(\theta_j^{-1}(x))\geqslant 2\}$ and $S_j^2=\{\mbox{second min}\{\theta_j^{-1}(x)\}:x\in T_j^2\}$. Construct $\psi_j^2:S_j^2\to T_j^2$, $\psi_j^2(x)=\theta_j(x)$. By induction we get $\psi_j^k\in[[E]]$, $k=1,\ldots,m$ such that $\chi_{graph(\theta_j)}=\sum_k\chi_{graph(\psi_j^k)}$.
\end{proof}


\subsection{Distance between DSE}

The distance between two doubly stochastic elements $\Phi=\{\vp_i:i\}$ and $\Psi=\{\psi_j:j\}$, of the same multiplicity, is defined as:
\[d(\Phi,\Psi)=\int_E|M(\Phi)-M(\Psi)|d\nu.\]

This definition is very intuitive, it measures how much the partial morphisms $\vp_i$ differ from the partial morphisms $\psi_j$. Note that if $f_+$ and $f_-$ are the positive and negative part of the function $M(\Phi)-M(\Psi)$ then: $\int_E f_+d\nu=\int_E f_-d\nu=\frac12d(\Phi,\Psi)$.

\subsection{Decomposable DSE}
A finite number of elements in $[E]$ generate a doubly stochastic element, the same way that a number of permutation matrices generate a doubly stochastic matrix. The main question of this paper is whenever every DSE is obtained in this way.

\begin{de}
A doubly stochastic element $\Phi=\{\vp_i:A_i\to B_i:i=1,\ldots,m\}$ is \emph{decomposable} if there exists $\{\theta_j:X\to X:j=1,\ldots,n\}$ elements in $[E]$ such that $\sum_{j=1}^n\chi(graph(\theta_j))=\sum_{i=1}^m\chi(graph(\vp_i))$, i.e. $\Phi$ is equivalent to a DSE composed only of elements in $[E]$.
\end{de}

We shall show that for $n=2$ there exists a DSE that is not decomposable. The construction is based on the fact that not every Borel forest of lines is obtained from an element of $[E]$. However we do have a positive result if we replace ``decomposable" by the following weaker requirement.

\begin{de}
A doubly stochastic element $\Phi$ is \emph{almost decomposable} if for any $\ve>0$ there exists 
a decomposable DSE $\Psi$ such that $d(\Phi,\Psi)<\ve$.
\end{de}

We shall prove that all doubly stochastic elements are almost decomposable, i.e. the set of decomposable doubly stochastic elements is dense in the set of doubly stochastic elements endowed with the distance $d$.

\section{Doubly stochastic elements of multiplicity 2}

\subsection{Borel forest of lines}

In this section $F\subset X^2$ is an arbitrary hyperfinite aperiodic equivalence relation (no finite equivalence classes), and our doubly stochastic elements have multiplicity 2.

\begin{de}
A \emph{Borel forest of lines} $\lron$ for $F$ is an arrangement of the classes in $F$ like $\zz$-orderings with no distinguished direction. Formally $\lron\subset F$, such that $\lron=flip(\lron)$, $\lron$ generates $F$, and $Card(\lron\cap(\{x\}\times X))=2$ for $\mu$-almost all $x$.
\end{de}

\begin{ob}
Note that ``$F$ aperiodic" and ``$\lron$ generates $F$" imply that $\lron$ doesn't have cycles, so indeed classes of $F$ are arranged in an $\zz$-chain. 
\end{ob}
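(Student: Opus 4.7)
My plan is to reduce the observation to a purely combinatorial classification of connected $2$-regular symmetric graphs, applied classwise. First, I would fix an $F$-class $C$ and consider the graph on $C$ whose edge set is $\lron \cap (C\times C)$. By the symmetry assumption $\lron = flip(\lron)$ this is an undirected graph, and by the degree-$2$ condition $Card(\lron\cap(\{x\}\times X))=2$ (valid for $\mu$-a.e.\ $x$; after discarding a $\nu$-null subset of $E$ I may assume it holds on every class under consideration), every vertex of this graph has degree exactly $2$.

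Next, the hypothesis that $\lron$ generates $F$ means that the equivalence relation obtained from $\lron$ by taking its transitive closure coincides with $F$. Equivalently, the graph $(C,\lron|_C)$ is connected for each $F$-class $C$. A standard fact from graph theory is that a connected undirected graph in which every vertex has degree $2$ is either a finite cycle $C_n$ (with $n \geqslant 3$), or the bi-infinite path, i.e.\ the Cayley graph of $\zz$ with generating set $\{\pm 1\}$. I would invoke this classification directly.

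Finally, since $F$ is aperiodic no $F$-class is finite, so the finite-cycle alternative is ruled out on every class. Hence $(C,\lron|_C)$ is the bi-infinite path for each class $C$; in particular $\lron$ has no cycles and the class $C$ is arranged as a $\zz$-chain, as claimed. The only mild technical nuisance is the usual measure-theoretic bookkeeping: the degree-$2$ condition and the ``generates'' condition only hold off a null set, so one has to throw away a $\mu$-null (resp.\ $\nu$-null) subset before running the combinatorial argument classwise. This is routine and does not affect the conclusion.
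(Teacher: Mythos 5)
Your argument is correct: the paper states this as an unproved observation, and the classification you invoke (a connected $2$-regular undirected graph is either a finite cycle, which would be an entire connected component and hence a finite $F$-class, or the bi-infinite path) together with aperiodicity is exactly the reasoning the authors leave implicit. The measure-theoretic caveat about discarding a null set is appropriately noted and harmless.
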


\begin{de}
For an element $\theta\in [F]$, that generates an aperiodic equivalence relation, define $\lron(\theta)=graph(\theta)\cup graph(\theta^{-1})$, the associated Borel forest of lines. If an arbitrary Borel forest of lines $\lron$ can be obtained by this construction we say that $\lron$ is \emph{generated by an automorphism}.
\end{de}

Remark 6.8 on page 21 of of \cite{Ke-Mi} shows that not every Borel forest of lines is generated by an automorphism. We present here a simplified version of that example.

\begin{ex}\label{Borel forest of lines} We take $X=[0,1]$ endowed with the Lebesgue measure.
Let $\vp_1,\vp_2:[0,1]\to[0,1]$ defined by $\vp_1(x)=1-x$ and $\vp_2(x)=\frac3{2^{n+1}}-x$ for $x\in(\frac1{2^{n+1}},\frac1{2^n})$. Note that $\vp_1^2=\vp_2^2=Id$. It follows that $\lron=graph(\vp_1)\cup graph(\vp_2)$ is a Borel forest of lines. Geometrically, $\vp_1$ is flipping the interval $[0,1]$, while $\vp_2$ is flipping the second half of $[0,1]$, the second half of $[0,1/2]$ and so on. The key observation is that $\vp_2\circ\vp_1$ is the odometer action, that is ergodic on $[0,1]$.

Assume that there exists $\theta:[0,1]\to[0,1]$ such that $\lron=\lron(\theta)$ almost everywhere. Let  $A=\{x\in[0,1]:\theta(x)=\vp_1(x)\}$. Let now $x\in A$. Then $\theta(x)=\vp_1(x)=1-x$, hence $\theta(1-x)$ must be equal to $\vp_2(1-x)$. Using the same argument in follows that $\vp_2(1-x)\in A$, so $A$ is invariant to the odometer action.

We have $x\in A$ if and only if $(1-x)\notin A$. Then $A\sqcup (1-A)=[0,1]$, so $\mu(A)=1/2$. Then $A$ is a set of measure $1/2$, invariant to an ergodic action. This is a contradiction.
\end{ex}

We can still save something out of this result if we ask that $\lron$ is generated on a set of  measure $1-\ve$, for any $\ve>0$.

\begin{te}\label{almost generated}
Let $\lron$ be a Borel forest of lines and $\ve>0$. Then there exists $\theta\in [F]$ such that $\nu\big(\lron\Delta\lron(\theta)\big)<\ve$.
\end{te}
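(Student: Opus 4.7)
The plan is to exploit the hyperfiniteness of $F$ to localise the construction of $\theta$: on each finite class of a sufficiently fine finite subrelation, $\lron$ decomposes into finite paths and isolated vertices, on which defining a compatible bijection is trivial.

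Since $F$ is hyperfinite, write $F=\bigcup_n F_n$ as an increasing union of Borel subequivalence relations with finite classes. Because $\nu(\lron)=2<\infty$ and $\lron\cap F_n\uparrow\lron$, monotone convergence gives $\nu(\lron\setminus F_n)\to 0$, so fix $n$ with $\nu(\lron\setminus F_n)<\varepsilon/2$. Inside each finite $F_n$-class $C$, the graph $\lron\cap C^2$ has vertex degree at most $2$ and no cycles (both inherited from $\lron$), so its connected components are finite paths and isolated vertices; an \emph{isolated vertex} being a point of $C$ both of whose $\lron$-neighbours lie outside $C$. Using a Borel linear order on $X$, choose in a Borel manner a distinguished endpoint $x_0$ of each path and list the path as $x_0,x_1,\dots,x_k$.

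Define $\theta\in[F_n]\subseteq[F]$ by closing each path into a cycle, $\theta(x_i)=x_{(i+1)\bmod(k+1)}$, and by $\theta(x)=x$ on each isolated vertex. Then $\lron(\theta)$ covers every edge of $\lron\cap F_n$, so $\lron\setminus\lron(\theta)\subseteq\lron\setminus F_n$ and $\nu(\lron\setminus\lron(\theta))<\varepsilon/2$. Conversely, $\lron(\theta)\setminus\lron$ picks up at each point $x$ at most one extra ordered edge: for a path endpoint, the closing edge; for an isolated vertex, the self-loop $(x,x)$; for interior path vertices, nothing. Since every path endpoint contributes one $\lron$-cross edge and every isolated vertex contributes two, the pointwise counting function of $\lron(\theta)\setminus\lron$ is dominated by that of $\lron\setminus F_n$, so $\nu(\lron(\theta)\setminus\lron)\leq\nu(\lron\setminus F_n)<\varepsilon/2$. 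Summing gives $\nu(\lron\Delta\lron(\theta))<\varepsilon$.

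The only delicate step is ensuring the component decomposition and endpoint selection are Borel, which is routine once a Borel order on $X$ is fixed. The conceptual point is that closing finite paths into cycles introduces ``extra'' mass dominated pointwise by the cross-edge mass of $\lron\setminus F_n$, which hyperfiniteness forces to be arbitrarily small; the obstruction of Example~\ref{Borel forest of lines} is evaded precisely because these cross-edges are permitted to be non-empty (just of small measure).
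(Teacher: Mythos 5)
Your proof is correct, but it takes a genuinely different route from the paper's. The paper follows Lemma 21.2 of \cite{Ke-Mi}: it fixes a complete Borel section $S$ with $\mu(S)<\ve$, defines $\theta_0(x)$ as the first step of the canonical $\lron$-path from $x$ to $S$, obtaining a partial isomorphism with $graph(\theta_0)\subset\lron$ and $\nu(graph(\theta_0))>1-\ve$, and then extends $\theta_0$ arbitrarily to an element of $[F]$ (ending with the slightly weaker bound $4\ve$). You instead use the hyperfiniteness hypothesis directly: exhaust $F$ by finite subrelations $F_n$, note that $\nu(\lron\setminus F_n)\to 0$ since $\nu(\lron)=2<\infty$, and close the resulting finite $\lron$-paths inside each $F_n$-class into cycles. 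The two approaches trade different hypotheses for different bookkeeping: the paper's argument only needs aperiodicity (small complete sections exist for any aperiodic countable relation) and all the work is in choosing canonical paths, whereas yours leans on hyperfiniteness --- which the paper does assume throughout this section, and which is in any case automatic for a relation generated by a Borel forest of lines --- and in exchange gets a cleaner, pointwise domination of the error: your key observation that each closing edge at a path endpoint (and each self-loop at an isolated vertex) is matched by at least one $\lron$-edge leaving the $F_n$-class is exactly right, since an endpoint of a component of $\lron\cap C^2$ has degree $1$ there and degree $2$ in $\lron$, so its second neighbour must lie outside $C$. This yields the sharp estimate $\nu(\lron\Delta\lron(\theta))<\ve$ with no loss of constants, and avoids the extension step (your $\theta$ is globally defined from the start). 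The Borel selection issues you defer are indeed routine given a Borel linear order on $X$ and the finiteness of the $F_n$-classes.
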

\begin{proof}
The proof is an adaptation of the proof of Lemma 21.2 form \cite{Ke-Mi}, a result due independently to Gaboriau, and Jackson-Kechris-Louveau. We reproduce here parts of that proof for the reader's convenience.

Fix a sequence $\{g_i\}\subset[[F]]$ such that $\lron=\bigsqcup_i graph(g_i)$. Let $S\subset X$ be a Borel complete selection for $F$ such that $\mu(S)<\ve$ (see Lemma 6.7 from \cite{Ke-Mi}). For $x\in X\setminus S$, we define $\theta(x)$ to be the $\lron$-neighbour of $x$ that is closer to $S$. In case of equality, we use the smallest $g_i$. Formally, let $n$ be the least length of an $\lron$-path $x,x_1',\ldots,x_n'=z\in S$ from $x$ to $S$. Among all such paths, choose the ``lexicographically least one", using the maps $g_i$. We call this path $x,x_1,\ldots,x_n$ the canonical $\lron$-path from $x$ to $S$. Notice that, in this case $x_1,x_2,\ldots,x_n$ is the canonical path from $x_1$ to $S$.

For $x\in X\setminus S$ define $\theta_0(x)=x_1$. Then $\theta_0\in[[F]]$ and $graph(\theta_0)\subset\lron$. Moreover $\nu(graph(\theta_0))=1-\mu(S)>1-\ve$. Extend $\theta_0$ to an element $\theta\in[F]$. It follows that $\nu\big(\lron\Delta\lron(\theta)\big)<2\nu\big(\lron\setminus\lron(\theta_0)\big)<4\ve$ and we are done.
\end{proof}

\subsection{A counter-example for multiplicity two}

The Borel forest of lines constructed in Example \ref{Borel forest of lines} can be transferred into a symmetric DSE of multiplicity 2. This object is not decomposable as a symmetric DSE, but it is decomposable as a DSE (maps $\vp_1$ and $\vp_2$ constructed in the cited example provide a decomposition). More on symmetric DS elements in Section \ref{symm DSE}. In order to construct a indecomposable DSE of multiplicity 2 we perform the following construction.

\begin{p}
Let $\lron$ be a Borel forest of lines. Then we can construct $\{\vp_i:A_i\to B_i:i=1,\ldots,m\}$, a DSE of multiplicity 2, such that $\lron=\bigsqcup_{i\neq j}graph(\vp_i^{-1}\vp_j)$.
\end{p}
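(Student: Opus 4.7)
My plan is to realise every $y\in X$ as the ``common image'' of exactly one edge of $\lron$. Concretely, if I can build a Borel class-preserving bijection $\tau:X\to\lron^+$ (where $\lron^+=\{(x,y)\in\lron:x<y\}$ for a Borel total order $<$ on $X$), then declaring the two endpoints of $\tau(y)$ to be the two preimages of $y$ in the DSE will make both the multiplicity two condition and the identity $\lron=\bigsqcup_{i\ne j}graph(\vp_i^{-1}\vp_j)$ hold essentially tautologically. Since $F$ is hyperfinite and aperiodic, every $F$-class is a $\zz$-chain and meets both $X$ and $\lron^+$ in countably infinite sets; using Theorem \ref{Kechris} to enumerate $\lron^+$ and a standard marriage-type argument for countable Borel equivalence relations produces such a bijection $\tau$ with $\tau(y)\in[y]_F\times[y]_F$ for every $y$.

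Writing $\tau(y)=(a(y),b(y))$ with $a(y)<b(y)$, I form the Borel incidence set
\[ I=\{(a(y),y):y\in X\}\;\sqcup\;\{(b(y),y):y\in X\}\subset E, \]
which is $2$-regular bipartite: each $y$ has the preimages $a(y),b(y)$, and each $x$ is an endpoint of exactly the two $\lron$-edges through it, which correspond via $\tau^{-1}$ to exactly two values of $y$. Because $I$ has maximum degree two on each side, its line graph has maximum degree at most two, and a Borel edge colouring yields a finite partition $I=\bigsqcup_{i=1}^m graph(\vp_i)$ with $\vp_i\in[[E]]$ partial bijections. The $2$-regularity of $I$ immediately gives $\sum_i\chi_{A_i}=2=\sum_i\chi_{B_i}$, so $\{\vp_i\}$ is a DSE of multiplicity two.

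To verify $\lron=\bigsqcup_{i\neq j}graph(\vp_i^{-1}\vp_j)$, take $(x,x')\in\lron$ and let $y=\tau^{-1}(\{x,x'\})$; the pairs $(x,y),(x',y)\in I$ lie in distinct colour classes (a single injective $\vp_k$ cannot send two distinct points to $y$), so $\vp_j(x)=y=\vp_i(x')$ for some $i\neq j$ and $(x,x')\in graph(\vp_i^{-1}\vp_j)$. Conversely any element of $graph(\vp_i^{-1}\vp_j)$ shares a common image $y$ for which $\tau(y)=\{x,x'\}$, placing $(x,x')$ in $\lron$; disjointness of the union follows since the unordered pair and its common image determine one another via $\tau$. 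The main obstacle is step one, the Borel construction of $\tau$: one is tempted to set $\tau(y)=\{y,n(y)\}$ for some neighbour-selector $n$, but this forces each vertex to have in- and out-degree one in the induced orientation, exhibiting $\lron$ as generated by an automorphism---impossible in general by Example \ref{Borel forest of lines}. The class-preserving Borel bijection that does exist in the hyperfinite setting avoids this trap by letting $\tau(y)$ be an edge of $\lron$ in $y$'s class that need not be incident to $y$, and this flexibility is exactly what will make the resulting DSE indecomposable when $\lron$ is not generated by an automorphism.
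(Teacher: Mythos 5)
Your construction is, at its core, the same as the paper's: you label each unordered edge of $\lron$ by a point of $X$ via a bijection $\tau:X\to\lron^+$ and send both endpoints of an edge to its label, which is exactly the paper's auxiliary space $Y=\bigsqcup_kT_k^Y$ (a disjointified copy of the edge set, of total measure $\nu(\lron^+)=1$) glued back to $X$ by an isomorphism. The verification of $2$-regularity and of $\lron=\bigsqcup_{i\neq j}graph(\vp_i^{-1}\vp_j)$ is correct, and splitting the $2$-regular incidence set $I$ into finitely many partial injections is fine (either by your degree-$2$ line-graph colouring or by Theorem \ref{Kechris} plus the ordering trick of Proposition \ref{equiv to finite}).

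The one soft spot is the step you yourself flag as ``the main obstacle'': you require $\tau$ to be \emph{class-preserving} ($\tau(y)\in[y]_F\times[y]_F$) and justify it only by appeal to ``a standard marriage-type argument.'' That existence claim is genuinely nontrivial: within each class it amounts to a measurable perfect matching between vertices and edges of a $\zz$-chain, and the obvious candidates (neighbour selectors) fail precisely because $\lron$ need not be generated by an automorphism. One can prove it in the measure-theoretic category --- the key point being that for every $F$-invariant set $C$ one has $\nu\bigl(\lron^+\cap(C\times C)\bigr)=\mu(C)$, so the edge set and the vertex set are equidistributed over the invariant $\sigma$-algebra and a pseudogroup exhaustion produces $\tau$ up to a null set --- but none of this appears in your write-up. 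More importantly, it is unnecessary: the proposition does not ask the $\vp_i$ to lie in $[[F]]$, only to form a DSE, so you may take $\tau$ to be \emph{any} measure isomorphism between the standard probability spaces $(X,\mu)$ and $(\lron^+,\nu|_{\lron^+})$ (both non-atomic of total measure $1$); the identity $\lron=\bigsqcup_{i\neq j}graph(\vp_i^{-1}\vp_j)$ and the multiplicity-two condition never use class-preservation. This is exactly why the paper introduces $Y$ abstractly and says ``as soon as we fix an isomorphism between $X$ and $Y$.'' Dropping the class-preservation requirement closes the only gap in your argument at no cost.
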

\begin{proof}
The idea is to construct a standard probability space $Y$ such that edges in $\lron$ first go to this space $Y$ and then return to $X$. There exists a collection of elements $\{\psi_k:S_k\to T_k\}_k\subset [[E]]$ such that $\lron=\bigsqcup_k(graph(\psi_k)\sqcup graph(\psi_k^{-1}))$. Now:
\[2=\nu(\lron)=2\sum_k\nu(\psi_k)=2\sum_k\mu(T_k),\]
so $\sum_k\mu(T_k)=1$. Let $(T_k^Y,\mu|_{T_k})$ be a copy of the set $T_k$ and define $Y=\bigsqcup_kT_k^Y$. Then $Y$ is a standard probability space. Now construct:
\begin{align*}
&\vp_{k,1}:S_k\to T_k^Y,\ \ \ \vp_{k,1}(x)=\psi_k(x);\\
&\vp_{k,2}:T_k\to T_k^Y,\ \ \ \vp_{k,2}(x)=x.
\end{align*}

Then $\vp_{k,2}^{-1}\vp_{k,1}(x)=\psi_k(x)$ and $\vp_{k,1}^{-1}\vp_{k,2}(x)=\psi_k^{-1}(x)$. The sets $\{T_k\}$ may not be disjoint, but their copies $\{T_k^Y\}$ are disjoint in $Y$. So $\vp_{k_1,s_1}^{-1}\vp_{k_2,s_2}$ is nonempty if and only if $k_1=k_2$. It follows that $\bigcup_{(k_1,s_1)\neq (k_2,s_2)}graph(\vp_{(k_1,s_1)}^{-1}\vp_{(k_2,s_2)})=\bigcup_k(graph(\psi_k)\cup graph(\psi_k^{-1}))=\lron$.

We want to prove that the collection $\{\vp_{k,s}\}$ is a DSE, as soon as we fix an isomorphism between $X$ and $Y$. An element $y\in T_k^Y$ is only in the images of the maps $\vp_{k,1}$ and $\vp_{k,2}$, so we are done with these elements. Any $x\in X$ belongs to exactly two sets selected from the collection $\{S_k,T_k\}_k$. Then $x$ is in the domain of exactly two maps from the set $\{\vp_{k,1},\vp_{k,2}\}_k$. It follows that $\{\vp_{k,s}\}$ is indeed a DSE. Use Proposition \ref{equiv to finite} to replace it by a finite DSE, if needed.
\end{proof}

\begin{te}
Let $\lron$ be a Borel forest of lines and construct $\Phi=\{\vp_i:A_i\to B_i:i=1,\ldots,m\}$ a DSE such that $\lron=\bigsqcup_{i\neq j}graph(\vp_i^{-1}\vp_j)$. If $\Phi$ is decomposable then $\lron$ is generated by an automorphism of $(X,\mu)$.
\end{te}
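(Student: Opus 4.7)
Assume $\Phi$ is decomposable: there exist $\theta_1,\theta_2\in[E]$ with
\[\chi(graph(\theta_1))+\chi(graph(\theta_2))=\sum_{i=1}^m\chi(graph(\vp_i)).\]
The plan is to set $\theta:=\theta_2^{-1}\theta_1$ and prove that $\theta$ generates an aperiodic subrelation and that $\lron=graph(\theta)\cup graph(\theta^{-1})$ almost everywhere. Heuristically, a decomposition of $\Phi$ into two elements of $[E]$ is a consistent orientation of every line of $\lron$: at each $y$, $\theta_1$ selects the ``incoming'' endpoint of the corresponding $\lron$-edge and $\theta_2$ the ``outgoing'' one; this is exactly the choice that Example~\ref{Borel forest of lines} shows can fail to exist in general.

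\textbf{Identifying preimages.} Evaluate the above identity at a generic $y$. On the left the contributing $x$'s are $\theta_1^{-1}(y)$ and $\theta_2^{-1}(y)$. On the right, since $\sum_i\chi_{B_i}=2\cdot Id$, there is a unique pair of indices $i_1\ne i_2$ with $y\in B_{i_1}\cap B_{i_2}$, and the corresponding $x$'s are $\vp_{i_1}^{-1}(y)$ and $\vp_{i_2}^{-1}(y)$. Thus for a.e.\ $y$ the pairs $\{\theta_1^{-1}(y),\theta_2^{-1}(y)\}$ and $\{\vp_{i_1}^{-1}(y),\vp_{i_2}^{-1}(y)\}$ coincide. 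By the formula $\lron=\bigsqcup_{i\neq j}graph(\vp_i^{-1}\vp_j)$ from the preceding proposition, the latter pair is an $\lron$-edge, so substituting $y=\theta_1(x)$ yields $(x,\theta(x))\in\lron$ for a.e.\ $x$. Hence $graph(\theta)\subset\lron$; interchanging the roles of $\theta_1$ and $\theta_2$ gives $graph(\theta^{-1})\subset\lron$.

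\textbf{Counting and aperiodicity.} Each of $graph(\theta)$ and $graph(\theta^{-1})$ has counting measure $1$, while $\nu(\lron)=2$; it is therefore enough to check that the two graphs are essentially disjoint. An overlap corresponds to a $2$-cycle of $\theta$, and more generally a finite $\theta$-orbit of length $k$ would close up an $\lron$-cycle $x,\theta(x),\ldots,\theta^{k-1}(x),x$, contradicting the aperiodic forest-of-lines structure. Consequently $\theta$ has only infinite orbits, it lies in $[F]$ (since $graph(\theta)\subset\lron\subset F$), $\lron(\theta)$ is well defined, and
\[\lron=graph(\theta)\sqcup graph(\theta^{-1})=\lron(\theta)\]
up to null sets, which means $\lron$ is generated by the automorphism $\theta$. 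The only genuine obstacle along the way is this last disjointness/aperiodicity step, and it reduces directly to the no-cycles property of $\lron$.
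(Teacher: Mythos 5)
Your overall route is the paper's route: the paper's entire proof is the chain of equalities $\lron=\bigcup_{i\neq j}graph(\vp_i^{-1}\vp_j)=graph(\theta_2^{-1}\theta_1)\cup graph(\theta_1^{-1}\theta_2)=\lron(\theta_2^{-1}\theta_1)$, asserted without further comment, and your second paragraph is exactly the verification of the middle equality via the observation that $\bigcup_{i\neq j}graph(\vp_i^{-1}\vp_j)$ depends only on the associated matrix, which the two decompositions share. So the plan and the inclusion $graph(\theta)\cup graph(\theta^{-1})\subset\lron$ are fine.

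The one step whose justification does not hold up as written is the essential disjointness of $graph(\theta)$ and $graph(\theta^{-1})$. A $2$-cycle of $\theta$ does not ``close up an $\lron$-cycle'': the closed walk $x,\theta(x),x$ traverses a single edge twice, which is not a cycle of the forest, so aperiodicity of $\lron$ does not by itself exclude $\theta^2=\id$ on a positive-measure set --- and if that happened, the union would have counting measure strictly less than $2$ and the theorem would fail. The correct reason is the degree count combined with the disjointness of the union $\bigsqcup_{i\neq j}graph(\vp_i^{-1}\vp_j)$: for a.e.\ $x$ the two domains $A_{j_1},A_{j_2}$ containing $x$ produce exactly the two elements $(x,n_1),(x,n_2)$ of $\lron\cap(\{x\}\times X)$, where $n_s$ is the second preimage of $y_s=\vp_{j_s}(x)$, and $Card(\lron\cap(\{x\}\times X))=2$ together with disjointness of the union forces $n_1\neq n_2$; your preimage identification then gives $\{\theta(x),\theta^{-1}(x)\}=\{n_1,n_2\}$, so the two graphs are a.e.\ disjoint and their union exhausts $\lron$. (The same degree count is what guarantees that the ``pairs'' in your second paragraph really consist of two distinct points, i.e.\ that $M(\Phi)\leqslant 1$ a.e., a point you use tacitly.) With this repair the argument is complete; aperiodicity of $\theta$ then comes for free, since once $\lron=\lron(\theta)$ the automorphism $\theta$ generates the same aperiodic relation as $\lron$.
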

\begin{proof}
Let $\{\theta_j:X\to X:j=1,2\}$ be elements in $[E]$ such that $\sum_{j=1}^2\chi_{graph(\theta_j)}=\sum_{i=1}^m\chi_{graph(\vp_i)}$. Then $\lron=\bigcup_{i\neq j}graph(\vp_i^{-1}\vp_j)=graph(\theta_2^{-1}\theta_1)\cup graph(\theta_1^{-1}\theta_2)=\lron(\theta_2^{-1}\theta_1)$.
\end{proof}

\begin{ex}\label{counterexample}
Inspecting the proof and Example \ref{Borel forest of lines} we can actually come up with an indecomposable DSE. It is composed of the following partial isomorphisms:
\begin{align*} 
&\vp_1:(0,\frac12)\to(\frac12,1), &\vp_1(x)&=x+\frac12;\\
&\vp_2:(\frac12,1)\to(\frac12,1), &\vp_2(x)&=x;\\
&\psi_n^1:(\frac1{2^{n+1}},\frac3{2^{n+2}})\to(\frac1{2^{n+2}},\frac1{2^{n+1}}), &\psi_n^1(x)&=x-\frac1{2^{n+2}};\\
&\psi_n^2:(\frac3{2^{n+2}},\frac1{2^n})\to(\frac1{2^{n+2}},\frac1{2^{n+1}}),& \psi_n^2(x)&=x-\frac1{2^{n+1}}.
\end{align*}
Maps $\{\psi_n^i\}_{n\in\nz}$, for $i=1,2$ can be glued to one map onto $(0,\frac12)$.
\end{ex}

\subsection{Main result for multiplicity two} We shall prove this result for any multiplicity, but in case $n=2$ we have an easier proof based on the properties of Borel forests of lines.

\begin{te}
Any DSE of multiplicity 2 is almost decomposable.
\end{te}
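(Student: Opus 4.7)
The plan is to encode $\Phi$ as a Borel forest of lines on an auxiliary bipartite space, apply Theorem \ref{almost generated} to approximate this forest by one generated by an automorphism, and then read off two elements of $[E]$ from that automorphism.

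First I would build the auxiliary space. Let $Y = X_L \sqcup X_R$ be two disjoint copies of $X$, each endowed with $\tfrac12\mu$, so that $Y$ is a standard probability space. For each partial isomorphism $\vp_i:A_i\to B_i$ of $\Phi$ and each $x\in A_i$, declare an edge in $Y\times Y$ joining $(x,L)$ to $(\vp_i(x),R)$, and let $\lron$ be the symmetric closure of the resulting set of edges. The condition $\sum_i\chi_{A_i}=2\cdot Id=\sum_i\chi_{B_i}$ is exactly saying that every vertex of $Y$ has precisely two $\lron$-neighbours, so the equivalence relation $F$ generated by $\lron$ is hyperfinite and its classes are cycles or bi-infinite lines.

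Next I would separate cycles from lines. Let $Y_{fin}$ be the set of vertices lying in finite cycles (even, because $\lron$ is bipartite), and $Y_{inf}$ the rest. On $Y_{fin}$, fix a Borel linear order on $Y$ and alternate edge-colours starting at the minimum vertex of each cycle; this yields a Borel 2-colouring directly. On $Y_{inf}$ the relation $F$ is aperiodic and $\lron|_{Y_{inf}}$ is a genuine Borel forest of lines. Given $\ve>0$, pick $\delta>0$ small and apply Theorem \ref{almost generated} to obtain $\tau\in[F|_{Y_{inf}}]$ with $\nu(\lron|_{Y_{inf}}\Delta\lron(\tau))<\delta$; then $graph(\tau)$ and $graph(\tau^{-1})$ give two matchings that, up to a set of $\nu$-measure $O(\delta)$, partition $\lron|_{Y_{inf}}$.

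Third I would pull back. Because every edge of $\lron$ joins $X_L$ to $X_R$, outside an error set of $\nu$-measure $O(\delta)$ the automorphism $\tau$ sends $X_L$ to $X_R$ and $X_R$ to $X_L$. Define $\theta_1(x)$ to be the $X_R$-coordinate of $\tau(x,L)$ and $\theta_2(x)$ to be the $X_R$-coordinate of $\tau^{-1}(x,L)$, then amalgamate with the matchings obtained on $Y_{fin}$. After correcting each on a set of $\mu$-measure $O(\delta)$ so that it becomes a genuine element of $[E]$, the collection $\Psi=\{\theta_1,\theta_2\}$ is a decomposable DSE of multiplicity $2$, and the symmetric-difference bound translates to $d(\Phi,\Psi)=O(\delta)<\ve$ provided $\delta$ is chosen small enough.

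The main obstacle is the bookkeeping between $X$ and $Y$, together with the handling of degeneracies. Parallel edges (cells with $M(\Phi)(x,y)=2$) create isolated two-vertex components of $\lron$ that do not fit the ``line'' framework and must be peeled off as a trivial base case with $\theta_1=\theta_2$ on the corresponding set. One must also verify that the partial element produced by Theorem \ref{almost generated} can be extended to a full element of $[F]$, and then to two full elements of $[E]$, without inflating the $d$-distance by more than a constant multiple of $\delta$.
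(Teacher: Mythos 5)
Your proof is essentially the paper's own argument: the paper likewise passes to the doubled space $Y=X\times\{1,2\}$, forms the bipartite set of edges $\lron_0=\bigcup_i\big((x,1),(\vp_i(x),2)\big)$ and its symmetrisation $\lron$, applies Theorem \ref{almost generated} to get $\theta\in[F]$ with $\nu(\lron\Delta\lron(\theta))$ small, and reads off the two automorphisms $\psi_1,\psi_2$ of $X$ from $\theta$ exactly as you do. The only difference is that you explicitly peel off the finite-cycle and parallel-edge degeneracies before invoking Theorem \ref{almost generated} (which is stated for aperiodic classes), a point the paper passes over silently; this is welcome extra care but not a different route.
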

\begin{proof}
Let $\Phi=\{\vp_i:A_i\to B_i:i=1,\ldots,m\}$ be a DSE of multiplicity 2. We construct a Borel forest of lines as follows. Let $Y=X\times\{1,2\}$ endowed with the product measure of $\mu$ and $Card$, so that the total measure of $Y$ is 2. Define $\lron_0=\bigcup_i\big((x,1),(\vp_i(x,2)\big)\subset Y^2$. Then $\lron=\lron_0\cup flip(\lron_0)$ is a Borel forest of lines on $Y$.

Now choose $\ve>0$. By Theorem \ref{almost generated} there exists $\theta:Y\to Y$ such that $m(\lron\Delta\lron(\theta))<2\ve$. Construct $\psi_1,\psi_2:X\to X$ defined by the equations $\theta(x,1)=(\psi_1(x),2)$ and $\theta(x,2)=(\psi_2^{-1}(x),1)$.

Let $\tr=\{\big((x,1),(\psi_j(x),2)\big):x\in X, j=1,2\}$, so that $\lron(\theta)=\tr\cup flip(\tr)$. It is easy to see that $\nu(\lron_0\Delta\tr)=\frac12\nu(\lron\Delta\lron(\theta))<\ve$. The conclusion follows as $\nu\big((\sum_{i=1}^m\chi_{graph(\vp_i)})\Delta(\sum_{j=1}^2\chi_{graph(\psi_j)})\big)=\nu(\lron_0\Delta\tr)$.
\end{proof}

\section{Main result}

In this section we prove that any DSE is almost decomposable. First, we give some definitions. For this section fix $\Phi=\{\xi_i\}_{i=1}^m$ a DSE of multiplicity $n$. Define $\hh=\bigcup_{i=1}^mgraph(\xi_i)\subset X\times X$. This set is the support of the associated matrix of $\Phi$.

\begin{nt}
For $C\subset X$ define the \emph{neighboring set} by $N(C)=\bigcup_{i=1}^m\xi_i(A_i\cap C)$.
\end{nt}

We shall obtain many useful inequalities using the equality of the right and left counting measures. Here is a first example that we prove in detail.

\begin{lemma}
For any $C\subset X$ we have $\mu(N(C))\geqslant\mu(C)$.
\end{lemma}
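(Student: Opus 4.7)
The plan is to count, in two different ways, the ``portion'' of the support $\hh$ that sits over $C$, and then extract the desired inequality from the measure-preserving property of the equivalence relation $E$.

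More explicitly, I would consider the restrictions $\xi_i|_{A_i\cap C}:A_i\cap C\to \xi_i(A_i\cap C)$. Each of these is still an element of $[[E]]$, and since graphs of $[[E]]$-elements lie in $E$, each $\xi_i$ is measure-preserving; in particular $\mu(A_i\cap C)=\mu(\xi_i(A_i\cap C))$. The sets $\xi_i(A_i\cap C)$ are all contained in $N(C)$ by definition, so summing over $i$ gives
\[
\sum_{i=1}^m \mu(A_i\cap C) \;=\; \sum_{i=1}^m \mu(\xi_i(A_i\cap C)) \;=\; \int_X \sum_{i=1}^m \chi_{\xi_i(A_i\cap C)}(y)\, d\mu(y).
\]

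Now I would use the two halves of the doubly stochastic condition. On the left side of the chain, I rewrite
\[
\sum_{i=1}^m \mu(A_i\cap C) \;=\; \int_C \sum_{i=1}^m \chi_{A_i}(x)\, d\mu(x) \;=\; \int_C n\, d\mu(x) \;=\; n\,\mu(C),
\]
using $\sum_i\chi_{A_i}=n\cdot \mathrm{Id}$. On the right side, I observe that $\xi_i(A_i\cap C)\subset B_i$, so $\sum_i\chi_{\xi_i(A_i\cap C)}\leqslant \sum_i\chi_{B_i}=n\cdot\mathrm{Id}$, and since this function is supported in $N(C)$,
\[
\int_X \sum_{i=1}^m \chi_{\xi_i(A_i\cap C)}(y)\, d\mu(y) \;\leqslant\; \int_{N(C)} n\, d\mu(y) \;=\; n\,\mu(N(C)).
\]

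Combining the two estimates yields $n\mu(C)\leqslant n\mu(N(C))$, and dividing by $n$ gives the claim. There is really no obstacle here: the content of the lemma is precisely that left and right counting measures of $\nu$ agree on the subset $\bigsqcup_i\mathrm{graph}(\xi_i|_{A_i\cap C})$ of $E$, and this is automatic because each $\xi_i$ is measure-preserving. The only mild care needed is to note that a point $y$ may lie in several $\xi_i(A_i\cap C)$, which is exactly why the multiplicity $n$ appears on both sides and cancels.
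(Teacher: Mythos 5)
Your proof is correct and is essentially the paper's argument: the identity $\mu(A_i\cap C)=\mu(\xi_i(A_i\cap C))$, summed over $i$, is exactly the equality of the left and right counting measures of $\nu$ applied to the function $f(x,y)=\big(\sum_i\chi_{graph(\xi_i)}(x,y)\big)\chi_C(x)$ that the paper integrates, and both sides are then evaluated and bounded in the same way using $\sum_i\chi_{A_i}=n\cdot Id$ and $\sum_i\chi_{B_i}=n\cdot Id$.
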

\begin{proof}
As $E$ is $\mu$-preserving, it follows that:
\[\int_Efd\nu=\int_{x\in X}\sum_{y\sim x}f(x,y)d\mu(x)=\int_{y\in X}\sum_{x\sim y}f(x,y)d\mu(y),\]
for each measurable function $f:E\to\cz$. Let $f(x,y)=\big(\sum_{i=1}^m\chi_{graph\vp_i}(x,y)\big)\times\chi_C(x)$. We can see that $\sum_y\chi_{graph(\vp_i)}(x,y)=\chi_{A_i}(x)$ and $\sum_x\big(\chi_{graph(\vp_i)}(x,y)\times\chi_C(x)\big)=\chi_{\vp_i(A_i\cap C)}(y)$. Then:
\[\int_{x\in X}\sum_{y\sim x}f(x,y)d\mu(x)=\int_{x\in C}\sum_{i=1}^m\sum_{y\sim x}\chi_{graph\vp_i}(x,y)d\mu(x)=\int_{x\in C}\sum_{i=1}^m\chi_{A_i}(x)=n\cdot\mu(C);\]
\[\int_{y\in X}\sum_{x\sim y}f(x,y)d\mu(y)=\int_{y\in X}\sum_{i=1}^m\big(\sum_y\chi_{graph(\vp_i)}(x,y)\times\chi_C(x)\big)d\mu(y)=\int_{y\in X}\sum_{i=1}^m\chi_{\vp_i(A_i\cap C)}(y)d\mu(y).\]

If $y\notin N(C)=\bigcup_{i=1}^m\vp_i(A_i\cap C)$ then $\sum_{i=1}^m\chi_{\vp_i(A_i\cap C)}(y)=0$. Independently of $y$, $\sum_{i=1}^m\chi_{\vp_i(A_i\cap C)}(y)\leqslant\sum_{i=1}^m\chi_{B_i}(y)=n$. All in all:
\[n\cdot\mu(C)=\int_{y\in X}\sum_{i=1}^m\chi_{\vp_i(A_i\cap C)}(y)d\mu(y)\leqslant\int_{y\in N(C)}n\cdot d\mu(y)=n\cdot\mu(N(C)).\]
\end{proof}

\begin{de}
A partial isomorphism $\theta\in[[E]]$ is called a \emph{piece} if $graph(\theta)\subset \hh$.
A piece $\theta:A\to B$ is called \emph{maximal} if $N(A^c)\subset B$.
\end{de}

A piece is maximal if there is no immediate way of extending it in a classical meaning. The next definition provides a notion of extension that is more suited to our context.

\begin{de}
Let $\theta:A\to B$ be a piece. An \emph{extension} of $\theta$ is a collection of pieces $\vp_i:S_{i-1}\to T_{i}$, $i=1,\ldots,k+1$ such that $S_i$ are disjoint for $i=0,1,\ldots,k$ and:
\begin{enumerate}
\item $S_0\subset A^c$ and $S_i\subset A\mbox{ for }i=1,\ldots,k$;
\item $T_i\subset B\mbox{ for }i=1,\ldots,k$ and $T_{k+1}\subset B^c$.
\item $\theta^{-1}(T_i)=S_i$ for $i=1,\ldots,k$;
\end{enumerate}
The number $k\in\nz$ is called the \emph{depth} of the extension.
\end{de}

Observe that a piece is maximal if and only if it admits no 0-depth extension. The name \emph{extension} is not arbitrary. From the information in the last definition one can construct a piece
$\theta':A\cup S_0\to B\cup T_{k+1}$ using $\theta$ and $\vp_i$, $i=0,1,\ldots,k$.

It can be proven that each piece, that is not defined on the whole space $X$, admits an extension. However this result is not sufficient to prove that there exists pieces defined on arbitrarily large sets. We need to control the size of these extensions. By a careful study of the problem, one understands that controlling the size requires also controlling the depth of the extension. Proposition \ref{prop:large_extension} provides a construction of an extension by controlling its size and depth. First we need two helpful lemmas. 

If $A,B\subset X$ are such that $\mu(A)>\mu(B)$ it is easy to see that there exist a piece from some $S\subset A$ to $T\subset B$ such that $\mu(S)\geqslant(\mu(A)-\mu(B))/2$. However we need the following version of this observation.

\begin{lemma}\label{main_lemma}
Let $A,B\subset X$ and let $\theta:C\to D$ be a piece such that $A\cap C=\emptyset$ and $B\cap D=\emptyset$. Then there exists $\theta_1:S\to T$ a piece with $S\subset A$, $T\subset(B\cup D)^c$ and:
\[\mu(S)\geqslant\frac{\mu(A)-\mu(B)}2-\frac{n-1}{2n}\mu(C).\]
\end{lemma}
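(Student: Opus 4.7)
The plan is to build $\theta_1$ as a maximal element of the natural poset of pieces landing correctly, and then read the inequality off a double-count of edges against $\hh$.

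I would first consider the family $\mathcal{M}$ of all pieces $\psi:S'\to T'$ satisfying $S'\subset A$ and $T'\subset(B\cup D)^c$, partially ordered by inclusion of graphs. The union of an increasing chain is again such a piece, so Zorn's lemma (implemented measurably by exhausting $\hh$ through the countable Borel decomposition supplied by Theorem \ref{Kechris}) furnishes a maximal element $\theta_1:S\to T$. Maximality forces the set $\hh\cap\big((A\setminus S)\times((B\cup D)^c\setminus T)\big)$ to have $\nu$-measure zero: otherwise a positive-measure sub-piece of this intersection could be spliced onto $\theta_1$, contradicting maximality.

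Next I would compare the left and right counting measures of $M(\Phi)$ restricted to the slab $(A\setminus S)\times X$. The left count equals $n(\mu(A)-\mu(S))$, since each $x\in A$ lies in exactly $n$ of the domains $A_i$. On the right, every edge leaving $A\setminus S$ must land in $B$, in $D$, or in $T$, since landings in $(B\cup D)^c\setminus T$ have just been excluded. Edges into $B$ contribute at most $n\mu(B)$, because each $y\in B$ receives at most $n$ incoming edges. Edges into $D$ contribute at most $(n-1)\mu(C)$: for every $y\in D$ the piece $\theta$ already supplies one incoming edge from $\theta^{-1}(y)\in C$, so at most $n-1$ of the $n$ incoming edges at $y$ can originate in $C^c\supset A\setminus S$. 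Edges into $T$ contribute at most $n\mu(T)=n\mu(S)$.

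Summing these four bounds yields $n(\mu(A)-\mu(S))\leq n\mu(B)+(n-1)\mu(C)+n\mu(S)$, and dividing by $2n$ rearranges to exactly the inequality stated in the lemma. The main delicate point is the opening step, ensuring that the maximal piece $\theta_1$ can be chosen measurably rather than only set-theoretically; the rest is routine double-counting in the spirit of the $\mu(N(C))\geqslant\mu(C)$ lemma above.
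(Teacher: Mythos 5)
Your proof is correct and follows essentially the same route as the paper's: take a maximal piece $\theta_1:S\to T$ with $S\subset A$, $T\subset(B\cup D)^c$, observe that maximality forces $N(A\setminus S)\subset B\cup D\cup T$, and compare the left and right counting measures of $M(\Phi)$ on $(A\setminus S)\times X$, with the $(n-1)\mu(C)=(n-1)\mu(D)$ saving coming from the edge of $\theta$ already arriving at each point of $D$. You merely spell out the measurable-maximality and double-counting steps that the paper leaves implicit.
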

\begin{proof}
Let $\theta_1:S\to T$ be a maximal piece with those properties. Then, if it can't be extended, it follows that $N(A\setminus S)\subset B\cup D\cup T$. By considering the left and right counting measure of $[(A\setminus S)\times X]\cap M$ we get:
\[n\mu(A\sm S)\leqslant n\mu(B)+(n-1)\mu(D)+n\mu(T).\]
As $\mu(S)=\mu(T)$ the conclusion follows.
\end{proof}

The next lemma is used to construct extensions of a maximal given depth.

\begin{lemma}
Let $\theta:A\to B$ be a piece and let $\psi_i:V_{i-1}\to W_i$ $i=1,\ldots,j+1$ be pieces such that $W_1,\ldots,W_j, W_{j+1}$ are disjoint subsets,  $W_i\subset B$ for $i\leqslant j$ and $V_i\subset\theta^{-1}(W_1\cup\ldots\cup W_i)$ for any $0<i\leqslant j$. Assume that $V_0\subset A^c$ and $W_{j+1}\not\subset B$. Then $\theta$ admits an extension of depth smaller or equal to $j$.
\end{lemma}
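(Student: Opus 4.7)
The plan is to produce the extension by selecting from $\psi_1,\ldots,\psi_{j+1}$ a strictly increasing subsequence of indices $1=q_1<q_2<\cdots<q_{k+1}=j+1$ and taking each $\vp_i$ to be a suitable restriction of $\psi_{q_i}$. The strict increase is forced by the hypothesis: if $\vp_i=\psi_{q_i}|_{S_{i-1}}$ then $S_{i-1}\subset V_{q_i-1}$, and since $V_{q_i-1}\subset\theta^{-1}(W_1\cup\cdots\cup W_{q_i-1})$ for $q_i\geqslant 2$ (while $V_0\subset A^c$), whereas we need $\theta(S_{i-1})=T_{i-1}\subset W_{q_{i-1}}$, disjointness of the $W_\ell$'s forces $q_{i-1}\leqslant q_i-1$. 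Since the $q_i$'s are distinct elements of $\{1,\ldots,j+1\}$, the resulting depth automatically satisfies $k\leqslant j$.

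I would build the data backwards. Set $T_{k+1}:=W_{j+1}\cap B^c$, which has positive measure by $W_{j+1}\not\subset B$, and pull back via $\psi_{j+1}$ to get $S_k^{\ast}:=\psi_{j+1}^{-1}(T_{k+1})\subset V_j\subset\theta^{-1}(W_1\cup\cdots\cup W_j)$. Decomposing $S_k^{\ast}$ according to which $\theta^{-1}(W_\ell)$ it meets, at least one piece has positive measure; pick such an index $q_k\leqslant j$ and put $S_k:=S_k^{\ast}\cap\theta^{-1}(W_{q_k})$, $T_k:=\theta(S_k)\subset W_{q_k}$. Iterate: given $T_i\subset W_{q_i}$ with $q_i\geqslant 2$, let $S_{i-1}^{\ast}:=\psi_{q_i}^{-1}(T_i)\subset V_{q_i-1}\subset\theta^{-1}(W_1\cup\cdots\cup W_{q_i-1})$, choose $q_{i-1}\in\{1,\ldots,q_i-1\}$ so that $S_{i-1}^{\ast}\cap\theta^{-1}(W_{q_{i-1}})$ has positive measure, set $S_{i-1}:=S_{i-1}^{\ast}\cap\theta^{-1}(W_{q_{i-1}})$, and cascade the shrinkage downstream via $T_i\leftarrow\psi_{q_i}(S_{i-1})$, $S_i\leftarrow\theta^{-1}(T_i)$, $T_{i+1}\leftarrow\psi_{q_{i+1}}(S_i)$, $\ldots$, up to $T_{k+1}$. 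The recursion stops the first time the chosen index becomes $1$---which must happen within at most $j$ backward steps, since the indices strictly decrease from $q_{k+1}=j+1$---and there we set $S_0:=\psi_1^{-1}(T_1)\subset V_0\subset A^c$, $\vp_1:=\psi_1|_{S_0}$, and $\vp_i:=\psi_{q_i}|_{S_{i-1}}$ for $i\geqslant 2$.

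Once this data is in hand, verifying the extension axioms is routine: $S_0\subset A^c$, $S_i=\theta^{-1}(T_i)\subset A$ and $T_i\subset W_{q_i}\subset B$ for $1\leqslant i\leqslant k$, $T_{k+1}\subset B^c$, and $\theta^{-1}(T_i)=S_i$ all hold by construction. Disjointness of $S_0,\ldots,S_k$ is automatic: $S_0\subset A^c$ is disjoint from $S_1,\ldots,S_k\subset A$, and for $1\leqslant i<i'\leqslant k$ one has $\theta(S_i)\subset W_{q_i}$, $\theta(S_{i'})\subset W_{q_{i'}}$ with $W_{q_i}\cap W_{q_{i'}}=\emptyset$ and $\theta$ a bijection on $A$, forcing $S_i\cap S_{i'}=\emptyset$. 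The main obstacle---indeed essentially the only one---is the bookkeeping of the cascading restrictions: when $S_{i-1}$ is shrunk, the already-constructed $T_i,S_i,T_{i+1},\ldots,T_{k+1}$ have to be shrunk compatibly. This is fine because every such restriction is by a positive-measure subset and positive measure is preserved by the partial bijections $\theta$ and $\psi_{q_\ell}$, so no set gets accidentally emptied along the chain.
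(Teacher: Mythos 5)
Your proof is correct and follows essentially the same route as the paper's: starting from $W_{j+1}\cap B^c$, pulling back through $\psi_{j+1}$, and iterating with a strictly decreasing chain of indices until index $1$ is reached in $V_0\subset A^c$, then transporting forward to assemble the extension. The only differences are cosmetic (your forward indexing $q_1<\cdots<q_{k+1}$ versus the paper's backward $i_0>i_1>\cdots>i_r$) plus your more explicit verification of the disjointness and the extension axioms, which the paper leaves implicit.
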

\begin{proof}
Let $T_1=W_{j+1}\cap B^c$. By hypothesis $\mu(T_1)>0$. Then $\psi_{j+1}^{-1}(T_1)\subset V_j\subset\theta^{-1}(W_1\cup\ldots\cup W_j)=\theta^{-1}(W_1)\cup\ldots\cup\theta^{-1}(W_j)$. It follows that there exists $i_1<j+1$ such that $\mu(\psi_{j+1}^{-1}(T_1)\cap\theta^{-1}(W_{i_1}))>0$.

Let $S_1=\psi_{j+1}^{-1}(T_1)\cap\theta^{-1}(W_{i_1})$ and $T_2=\theta(S_1)$. Then $T_2\subset W_{i_1}=\psi_{i_1}(V_{i_1-1})$. If $i_1=1$ then we are done, as $\psi_1$ restricted to $\psi_1^{-1}(T_2)$ and $\psi_{j+1}$ restricted to $S_1=\theta^{-1}(T_2)$ provide an extension of $\theta$ of depth $1$. If $i_1>1$ then $\psi_{i_1}^{-1}(T_2)\subset V_{i_1-1}\subset\theta^{-1}(W_1)\cup\ldots\cup\theta^{-1}(W_{i_1-1})$. Hence, there exists $i_2<i_1$ such that $\mu(S_2)>0$, where $S_2=\psi_{i_1}^{-1}(T_2)\cap\theta^{-1}(W_{i_2})$.

Inductively define $S_r=\psi_{i_{r-1}}^{-1}(T_r)\cap\theta^{-1}(W_{i_r})$ such that $\mu(S_r)>0$ and $T_{r+1}=\theta(S_r)$. If $i_r>1$ then $T_{r+1}\subset W_{i_r}=\psi_{i_r}(V_{i_r-1})$, so $\psi_{i_r}^{-1}(T_{r+1})\subset V_{i_r-1}\subset\theta^{-1}(W_1)\cup\ldots\cup\theta^{-1}(W_{i_r-1})$. Then there exists $i_{r+1}<i_r$ such that $S_{r+1}=\psi_{i_r}^{-1}(T_{r+1})\cap\theta^{-1}(W_{i_{r+1}})$ and $\mu(S_{r+1})>0$. 

If $i_r=1$ then $\psi_{i_r}^{-1}(T_{r+1})\subset V_0\subset A^c$ and this set $\psi_{i_r}^{-1}(T_{r+1})$ alternatively transported with maps $\psi_{i_r},\psi_{i_{r-1}},\ldots,\psi_{i_0}$ and $\theta^{-1}$ is an extension of $\theta$ of depth $r$ (where $i_0=j+1$). As $1=i_r<i_{r-1}<\ldots<i_1<i_0=j+1$ it follows that $r\leqslant j$.
\end{proof}

The following proposition is the main step in the proof of the result of this section.

\begin{p}\label{prop:large_extension}
Let $\theta:A\to B$ be a piece. Then there exists a piece $\theta_1:C\to D$ such that $\mu(C)\geqslant\mu(A)+(\frac{\mu(A^c)}{7n+\mu(A^c)})^2$.
\end{p}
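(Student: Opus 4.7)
The plan is to combine Lemma~\ref{main_lemma} with the depth lemma (the preceding result) in an iterative augmenting-chain construction. Write $\delta=\mu(A^c)$ and $\eta=\delta/(7n+\delta)$, so the goal is to exhibit $\theta_1$ whose domain $C$ satisfies $\mu(C)\geq\mu(A)+\eta^2$.

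I would first attempt a depth-$0$ extension: apply Lemma~\ref{main_lemma} with ``$A$''$=A^c$, ``$B$''$=\emptyset$, and reference piece $\theta:A\to B$. The hypotheses $A^c\cap A=\emptyset$ and $\emptyset\cap B=\emptyset$ are clear, and the lemma produces a piece $\psi:V_0\to W_1$ with $V_0\subset A^c$, $W_1\subset B^c$, and $\mu(V_0)\geq\tfrac{\delta}{2}-\tfrac{n-1}{2n}(1-\delta)$. Whenever this lower bound is at least $\eta^2$, the disjoint union $\theta_1=\theta\sqcup\psi$ already finishes the proof.

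In the complementary regime $\delta$ is small relative to $\mu(A)$, and I would iteratively build a chain of pieces $\psi_j:V_{j-1}\to W_j$ inside $B$: with $V_0\subset A^c$, the $W_j$ pairwise disjoint subsets of $B$ for $j\leq k$, and $V_j\subset\theta^{-1}(W_1\cup\cdots\cup W_j)$ chosen pairwise disjoint from the earlier $V_i$. At step $j$ I would apply Lemma~\ref{main_lemma} with ``$A$''$=V_{j-1}$, ``$B$''$=\emptyset$, and reference piece $\Psi_{j-1}=\bigsqcup_{i<j}\psi_i$; this is a legitimate piece because the $V_i$ (and the $W_i$) are pairwise disjoint, and the hypothesis $V_{j-1}\cap(V_0\cup\cdots\cup V_{j-2})=\emptyset$ is built into the construction. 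The output $\psi_j$ lands in $(W_1\cup\cdots\cup W_{j-1})^c$ with a quantitative lower bound on its mass. The iteration stops as soon as $W_j\cap B^c$ has positive measure, at which point the depth lemma extracts an extension of depth at most $j-1$, from which the size of the augmenting piece can be read off.

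The main obstacle is converting the depth lemma's essentially qualitative conclusion into a quantitative $\eta^2$ gain. Each pigeonhole descent in that lemma is only guaranteed to preserve positive mass, so naive bookkeeping loses a factor of order (chain length) per level. Recovering $\eta^2$ requires capping the chain length at $k=O(1/\eta)$---beyond that Lemma~\ref{main_lemma} supplies no useful lower bound, since the term $\tfrac{n-1}{2n}\mu(\mathrm{dom}(\Psi_{j-1}))$ swamps $\tfrac{1}{2}\mu(V_{j-1})$---and calibrating the step masses $\mu(V_{j-1})$ so that a single pigeonhole selection in the descent preserves an $\eta$-fraction of the escape mass. The constant $7n$ presumably arises from optimising the interplay between the $\tfrac{n-1}{2n}$ penalty in the lemma, the split between the direct and chain regimes, and the one-layer pigeonhole loss; the final bound is the product of an $\eta$-scale augmenting mass and an $\eta$-scale pigeonhole fraction, giving $\eta^2=(\delta/(7n+\delta))^2$.
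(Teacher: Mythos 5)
Your proposal assembles the right ingredients (Lemma~\ref{main_lemma}, the depth lemma, chains of length capped at $k\approx 7n/\mu(A^c)$), but the mechanism you describe for extracting the quantitative gain does not work, and you essentially concede this in your last paragraph without resolving it. You build a single chain $\psi_j:V_{j-1}\to W_j$ until some $W_j$ escapes $B$, and then want to ``read off'' the size of the augmenting piece from the extension that the depth lemma produces. But the depth lemma is purely qualitative: each pigeonhole descent only guarantees that \emph{some} intersection $\psi^{-1}(T_r)\cap\theta^{-1}(W_{i_r})$ has positive measure, and tracking this quantitatively loses a factor of order $k$ at each of up to $k$ levels, so the single extension you extract can have measure far below $\eta^2$. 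No calibration of the step masses repairs this, because the mass of the chain's escape set can be concentrated on an arbitrarily thin sub-branch of the descent. A second, independent problem is your choice of inputs to Lemma~\ref{main_lemma} at each step: taking ``$A$''$=V_{j-1}$ and the accumulated chain $\Psi_{j-1}$ as the reference piece gives $\mu(V_j)\geqslant\tfrac12\mu(V_{j-1})-\tfrac{n-1}{2n}\sum_{i<j-1}\mu(V_i)$, which decays geometrically; after $O(1/\eta)$ steps the bound is exponentially small, not of order $\eta$.

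The paper's argument is structured differently and avoids both problems. One first fixes a \emph{maximal family} of extensions of $\theta$ of depth at most $k$, with total entry mass $\mu(S)$ and total mass $\mu(T_e)\leqslant(k+1)\mu(S)$; the augmentation at the end uses this whole family at once, gaining exactly $\mu(S)$, so the task reduces to proving $\mu(S)>1/(k+1)^2$. The chain and the depth lemma are then used only to derive a contradiction from the assumption that $\mu(S)$ is small: at step $r$ one applies Lemma~\ref{main_lemma} to ``$A$''$=\theta^{-1}(W_1\cup\dots\cup W_r)$ (the union of \emph{all} preimages so far) against ``$B$''$=T_e\cup W_2\cup\dots\cup W_r$ with the \emph{fixed} reference piece $\psi_0:V_0\to W_1$, which yields the uniform bound $\mu(W_{r+1})\geqslant\tfrac{\mu(W_1)}{2n}-\tfrac{\mu(T_e)}{2}$ at every step; and maximality of the family forces each $W_{r+1}\subset B$ (otherwise the depth lemma would produce a new depth-$\leqslant k$ extension disjoint from the family, contradicting maximality). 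Having $k+1$ disjoint subsets of $B$, each of measure at least $\tfrac{\mu(A^c)}{4n}$ minus an error controlled by $(k+1)\mu(S)$, contradicts $\mu(B)<1$ unless $\mu(S)>1/(k+1)^2\geqslant\bigl(\tfrac{\mu(A^c)}{7n+\mu(A^c)}\bigr)^2$. This ``maximal family plus counting'' step is the missing idea in your write-up; without it the $\eta^2$ bound is not attainable along the route you sketch.
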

\begin{proof}
Let $k=\lfloor\frac{7n}{\mu(A^c)}\rfloor$. We can assume that $\theta$ is maximal.

Let $\{\vp_i^j:S_i^j\to T_{i+1}^j|i=1,\ldots,k(j)\}_j$ be a maximal collection of extensions of $\theta$ of depth smaller or equal to $k$. This means that we require that the sets $(S_i^j)_{i,j}$ are disjoint and the sets $(T_i^j)_{i,j}$ are also disjoint. Additionally $k(j)\leqslant k$ for any $j$ and there is no extra extension $\{\vp_i\}_i$ of $\theta$ that can be added to the family without breaking at least one of these properties.

Let $S=\cup_jS_0^j$, $S_e=\cup_{i,j}S_i^j$ and $T_e=\cup_{i,j}T_i^j$. As the maximal depth of any extension is $k$, we have $\mu(S_e)\leqslant (k+1)\mu(S)$. Also $\mu(S_e)=\mu(T_e)$.

Using Lemma \ref{main_lemma} for $(A^c\sm S)$ and $T_e$, we deduce that there exists a piece $\psi_0:V_0\to W_1$ such that $V_0\subset (A^c\sm S)$, $W_1\subset X\sm T_e$ and 
\[\mu(W_1)\geqslant\frac{\mu(A^c\sm S)-\mu(T_e)}2.\]
As $\theta$ is maximal, it follows that $W_1\subset B$.

Now we apply Lemma \ref{main_lemma} to sets $\theta^{-1}(W_1)$, $T_e$ and the piece $\psi_0:V_0\to W_1$ to deduce the existence of $\psi_1:V_1\to W_2$ such that $V_1\subset\theta^{-1}(W_1)$, $W_2\subset X\sm(T_e\cup W_1)$ and:
\[\mu(W_2)\geqslant\frac{\mu(W_1)-\mu(T_e)}2-\frac{n-1}{2n}\mu(W_1)=\frac{\mu(W_1)}{2n}-\frac{\mu(T_e)}2.\]
As $\theta$ admits no new extension of depth less than $k$, it follows that $W_2\subset B$, so $W_2$ is actually a subset of $B\sm(T_e\cup W_1)$. 

For the next step, apply Lemma \ref{main_lemma} for $\theta^{-1}(W_1\cup W_2)$, $T_e\cup W_2$ and the piece $\psi_0:V_0\to W_1$. There exists $\psi_2:V_2\to W_3$ such that $V_2\subset\theta^{-1}(W_1\cup W_2)$, $W_3\subset X\sm(T_e\cup W_1\cup W_2)$ and:
\[\mu(W_3)\geqslant\frac{\mu(W_1\cup W_2)-\mu(T_e\cup W_2)}2-\frac{n-1}{2n}\mu(W_1)=\frac{\mu(W_1)}{2n}-\frac{\mu(T_e)}2.\]

If $W_3\not\subset B$ then, by the previous lemma, there exists an extension of $\theta$ of depth less than $2$. As this extension will use only maps $\psi_0,\psi_1$ and $\psi_2$ it will not intersect any extension from the family $\{\vp_i^j\}_j$. This contradicts the maximality of this family. It follows that $W_3\subset B$.

Inductively apply Lemma \ref{main_lemma} to $\theta^{-1}(W_1\cup\ldots\cup W_r)$, $T_e\cup W_2\cup\ldots\cup W_r$ and the piece $\psi_0:V_0\to W_1$ to get $\psi_r:V_r\to W_{r+1}$ such that $V_r\subset\theta^{-1}(W_1\cup\ldots\cup W_r)$, $W_{r+1}\subset X\sm(T_e\cup W_1\cup\ldots\cup W_r)$ and:
\[\mu(W_{r+1})\geqslant\frac{\mu(W_1\cup\ldots\cup W_r)-\mu(T_e\cup W_2\cup\ldots\cup W_r)}2-\frac{n-1}{2n}\mu(W_1)=\frac{\mu(W_1)}{2n}-\frac{\mu(T_e)}2.\]

As long as $r\leqslant k$, the previous lemma can be used to deduce that $W_{r+1}\subset B$.

In the end we have $k+1$ disjoint subsets of $B$. It follows that $\sum_{r=1}^{k+1}\mu(W_r)\leqslant\mu(B)$. Recall that $\mu(T_e)\leqslant(k+1)\mu(S)$. Using the above inequalities we get:
\begin{align*}
\mu(W_2) & \geqslant\frac{\mu(W_1)}{2n}-\frac{\mu(T_e)}2\geqslant\frac{\mu(A^c\sm S)-\mu(T_e)}{4n}-\frac{\mu(T_e)}2>\frac{\mu(A^c)-2(k+1)\mu(S)}{4n}-\frac{(k+1)\mu(S)}2\\ 
= & \frac{\mu(A^c)}{4n}-\frac{2(n+1)(k+1)\mu(S)}{4n}\\
\end{align*}
As $1>\mu(B)\geqslant\sum_{r=1}^{k+1}\mu(W_r)>(k+1)\mu(W_2)$ we get:
\begin{align*}
4n&>(k+1)(\mu(A^c)-2(n+1)(k+1)\mu(S))>7n-2(n+1)(k+1)^2\mu(S)\\
\mu(S)&>\frac{3n}{2(n+1)(k+1)^2}>\frac1{(k+1)^2}>\big(\frac{\mu(A^c)}{7n+\mu(A^c)}\big)^2.
\end{align*}
It follows that by using the extensions $\{\vp_i^j\}_j$ we can construct the required piece.
\end{proof}

\begin{te}\label{pass_to_limit}
For any DSE and any $\ve>0$, there exists a piece $\theta:A\to B$ such that $\mu(A)>1-\ve$.
\end{te}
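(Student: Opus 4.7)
The plan is to iterate Proposition \ref{prop:large_extension}. Starting from any piece $\theta_0$ (for instance the empty piece, or any maximal extension thereof), I would recursively apply the proposition to produce a sequence of pieces $\theta_k : A_k \to B_k$. Writing $\delta_k = \mu(A_k^c) = 1 - \mu(A_k)$, each application yields the recursive bound
$$\delta_{k+1} \leq \delta_k - \left(\frac{\delta_k}{7n + \delta_k}\right)^2.$$
The task is then purely analytic: show that $\delta_k \to 0$, so that for any $\varepsilon > 0$ some index $k$ satisfies $\delta_k < \varepsilon$, and the piece $\theta_k$ at that stage does the job.

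Since $\delta_k \leq 1$ for all $k$, I would replace the recurrence by the cleaner bound $\delta_{k+1} \leq \delta_k - \delta_k^2/(7n+1)^2$. The standard device for such quadratic recurrences is to examine the sequence $1/\delta_k$. From
$$\frac{1}{\delta_{k+1}} - \frac{1}{\delta_k} = \frac{\delta_k - \delta_{k+1}}{\delta_k\,\delta_{k+1}} \geq \frac{\delta_k^2/(7n+1)^2}{\delta_k\,\delta_{k+1}} \geq \frac{1}{(7n+1)^2}$$
(using $\delta_{k+1} \leq \delta_k$ in the last step), telescoping gives $1/\delta_k \geq 1/\delta_1 + (k-1)/(7n+1)^2$. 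Hence $\delta_k = O(1/k) \to 0$, and for any prescribed $\varepsilon > 0$ one can choose $k$ large enough that $\mu(A_k) > 1 - \varepsilon$.

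The whole argument is essentially a bookkeeping exercise given the proposition; the real work was done in Proposition \ref{prop:large_extension}. The only minor subtlety is the base case of the iteration, since the proposition is stated for a piece $\theta : A \to B$ and we want to begin with $\mu(A) = 0$. This can be handled either by noting that the empty partial isomorphism is itself a piece (so the proposition applies with $\mu(A^c) = 1$, yielding $\mu(C) \geq 1/(7n+1)^2$ after one step), or by first choosing any nonempty maximal piece to start from; since the increment in the proposition depends only on the measures $\mu(A)$ and $\mu(A^c)$, the recursion proceeds unchanged in either case.
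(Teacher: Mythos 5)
Your proof is correct and follows essentially the same route as the paper: iterate Proposition \ref{prop:large_extension} and show that the defect $\delta_k=\mu(A_k^c)$ tends to $0$. The only difference is in the final analytic step — the paper argues softly that the limit $l$ of the bounded increasing sequence $\mu(A_k)$ must satisfy $l\geqslant l+\bigl(\frac{1-l}{7n+1-l}\bigr)^2$, forcing $l=1$, whereas you extract an explicit $O(1/k)$ rate via the $1/\delta_k$ telescoping; both are fine, and your handling of the base case is a harmless extra precaution.
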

\begin{proof}
Using the last proposition construct a sequence of pieces $\theta_i:A_i\to B_i$ such that $\mu(A_{i+1})>\mu(A_i)+(\frac{1-\mu(A_i)}{7n+1-\mu(A_i)})^2$. Then $(\mu(A_i))_i$ is a bounded, increasing sequence. Its limit $l$ must obey the inequality $l\geqslant l+(\frac{1-l}{7n+1-l})^2$. It follows that $0\geqslant (1-l)^2$ so $l=1$.
\end{proof}

\begin{te}\label{th:decDSE}
Any DSE is almost decomposable.
\end{te}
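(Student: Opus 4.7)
The plan is to induct on the multiplicity $n$. For $n=1$ the statement is immediate: a DSE of multiplicity $1$ consists of partial isomorphisms whose domains partition $X$ and whose ranges partition $X$, so they glue to a single element of $[E]$.

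For the inductive step, fix $\Phi$ of multiplicity $n\geqslant 2$ and $\ve>0$, and pick $\delta\ll\ve$. Theorem \ref{pass_to_limit} produces a piece $\theta:A\to B$ with $graph(\theta)\subset\H$ and $\mu(A^c)<\delta$. I would first extend $\theta$ to a full $\tilde\theta\in[E]$ by adjoining a partial isomorphism $\alpha:A^c\to B^c$ in $[[E]]$; its existence follows from the ergodic decomposition of $E$, since $graph(\theta)\subset E$ forces $\mu(A\cap C)=\mu(B\cap C)$ and hence $\mu(A^c\cap C)=\mu(B^c\cap C)$ on every $E$-invariant set $C$, and within each ergodic component $[[E]]$ acts transitively on subsets of equal measure. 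I would then build a DSE $\Phi_1$ of multiplicity $n-1$ whose matrix lies within $\nu$-distance $O(\delta)$ of $M(\Phi)-\chi_{graph(\tilde\theta)}$: this raw difference has constant row and column sums $n-1$ but takes value $-1$ on the set $N=\{(x,\tilde\theta(x)):M(\Phi)(x,\tilde\theta(x))=0\}$, of $\nu$-measure at most $\delta$. Adding $\chi_N$ restores nonnegativity while inflating row and column sums by $\chi_U$ and $\chi_V$, where $U,V$ are the projections of $N$; subtracting $\chi_{graph(\beta)}$ for a partial isomorphism $\beta:U\to V$ in $[[E]]$ then restores constant sums. The induction hypothesis applied to $\Phi_1$ with tolerance $\ve/2$ yields $\theta_2,\ldots,\theta_n\in[E]$ with $d(\Phi_1,\{\theta_i\}_{i=2}^n)<\ve/2$, and $\Psi=\{\tilde\theta,\theta_2,\ldots,\theta_n\}$ is then a decomposable DSE of multiplicity $n$ with $d(\Phi,\Psi)\leqslant O(\delta)+\ve/2<\ve$ once $\delta$ is sufficiently small.

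The main obstacle will be the construction of $\Phi_1$: the partial isomorphism $\beta$ must have its graph supported inside the modified matrix so that the final subtraction does not reintroduce negative entries. The unconstrained existence of $\beta\in[[E]]$ is handled by the same ergodic-decomposition argument used for $\alpha$, but the support constraint is genuinely delicate. I expect to resolve it either by an iterative correction scheme in which the residual defect has geometrically shrinking $\nu$-measure, or by a measurable Hall-type matching argument exploiting the $n$-regularity of $\H$ to guarantee enough candidate edges from $U$ into $V$ inside $\H\setminus graph(\tilde\theta)$. Either way, every correction is confined to sets of $\mu$-measure at most $\delta$, so the cumulative discrepancy is of order $\delta$ and fits comfortably within the overall error budget.
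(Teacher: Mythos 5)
Your overall strategy --- peel off a near-full piece $\theta:A\to B$ via Theorem \ref{pass_to_limit}, extend it to $\tilde\theta\in[E]$, correct the marginals to obtain a DSE of multiplicity $n-1$, and induct --- is exactly the paper's, and your base case and error accounting are fine. The gap is precisely the step you yourself flag as the ``main obstacle'', and both of your proposed escape routes fail. You want to repair $M(\Phi)-\chi_{graph(\tilde\theta)}$ by adding $\chi_N$ (where $N\subset graph(\tilde\theta)\cap(A^c\times B^c)$) and then subtracting $\chi_{graph(\beta)}$ for some $\beta:U\to V$ with $U\subset A^c$, $V\subset B^c$ and $graph(\beta)$ supported in the corrected matrix. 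But one may (and should) take $\theta$ maximal, i.e.\ admitting no piece from a subset of $A^c$ to a subset of $B^c$; by Theorem \ref{Kechris} this forces $\nu\big(\hh\cap(A^c\times B^c)\big)=0$, so apart from $N$ itself the corrected matrix vanishes identically on $U\times V$: the Hall-type matching you hope for has an empty edge set, and an iterative scheme has nothing to iterate on. Dropping maximality does not help: the $n$-regularity of $\hh$ gives each $x\in U$ exactly $n$ edges, but they land in $B$ rather than in $V$, so there is no reason a $U$-saturating matching into $V$ exists inside $\hh$.

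The paper's resolution is to reorganize the correction so that the only modification \emph{not} supported in $\hh$ is an \emph{addition}, which can never break nonnegativity. After subtracting $\chi_{graph(\theta)}$ (harmless, since $graph(\theta)\subset\hh$), the rows over $A^c$ and the columns over $B^c$ still have sum $n$. One therefore subtracts pieces $\vp_i:C_i\to D_i$ with $\bigsqcup_iC_i=A^c$ (which exist inside $\hh$ and, by maximality, satisfy $D_i\subset B$) and pieces $\psi_i:V_i\to W_i$ with $\bigsqcup_iW_i=B^c$ (so $V_i\subset A$); these live in $A^c\times B$ and $A\times B^c$ respectively and are disjoint from $graph(\theta)$ and from each other, so all subtractions stay supported in the matrix. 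The resulting one-unit deficiencies along the columns $\bigcup_iD_i$ and the rows $\bigcup_iV_i$ are then repaired by adding arbitrary, unconstrained partial isomorphisms $\delta_j$ with the matching marginals $\sum_j\chi_{S_j}=\sum_i\chi_{V_i}$, $\sum_j\chi_{T_j}=\sum_i\chi_{D_i}$. Each of the four modifications has $\nu$-mass $\mu(A^c)$, giving $d(\Phi,\Psi\sqcup\{\tilde\theta\})\leqslant 4\mu(A^c)$, after which the induction runs as you describe. So what is missing is not a cleverer matching lemma but a rerouting of the correction: push the subtractions through $B$ and $A$, where the matrix has mass, and reserve the free, unsupported move for the addition.
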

\begin{proof}
Obviously the proof goes by induction on the multiplicity of the DSE. Let $\Phi$ be a DSE of multiplicity $n$ and let $\ve>0$. Choose $\theta:A\to B$ a piece such that $\mu(A)>1-\ve/8$. We can assume that $\theta$ is maximal, i.e. there is no piece $\vp:C\to D$ with $C\subset A^c$ and $D\subset B^c$. Extend $\theta$ to $\tilde\theta\in[E]$. Our goal is to construct $\Psi$ a DSE of multiplicity $n-1$ such that $d(\Phi,\Psi\sqcup\{\tilde\theta\})$ is small.

Choose a sequence of pieces $\vp_i:C_i\to D_i$ such that $\bigsqcup_iC_i=A^c$ and a another sequence of pieces $\psi_i:V_i\to W_i$ such that $\bigsqcup_IW_i=B^c$. Then $\sum_i\mu(V_i)=\mu(B^c)=\mu(A^c)=\sum_i\mu(D_i)$. Finally choose a sequence of measure preserving partial morphisms $\delta_j:S_j\to T_j$ such that $\sum_j\chi_{S_j}=\sum_i\chi_{V_i}$ and $\sum_j\chi_{T_j}=\sum_i\chi_{D_i}$. Define $f=M(\Phi)-\chi(graph(\theta))-\sum_i\chi(graph(\vp_i))-\sum_i\chi(graph(\psi_i))+\sum_j\chi(graph(\delta_j))$. Then $\sum_xf(x,y)=n-1$ for any $y$ and $\sum_yf(x,y)=n-1$ for any $x$. By Theorem \ref{Kechris}, there exists $\Psi$, a DSE of multiplicity $n-1$, such that $M(\Psi)=f$.  Then:
\[d(\Phi,\Psi\sqcup\{\tilde\theta\})\leqslant\int\chi(graph(\tilde\theta))-\chi(graph(\theta))-\sum_i\chi(graph(\vp_i))-\sum_i\chi(graph(\psi_i))+\sum_j\chi(graph(\delta_j)).\]
Notice that $\int\sum_i\chi(graph(\vp_i))d\nu=\int\chi(graph(\psi_i))d\nu =\int\chi(graph(\delta_j)d\nu=\mu(A^c)$. It follows that $d(\Phi,\Psi\sqcup\{\tilde\theta\})\leqslant4\mu(A^c)<\ve/2$. 

By induction, there exists a decomposable DSE $\Psi_1$ such that $d(\Psi_1,\Psi)<\ve/2$. Then $\Psi_1\sqcup\{\tilde\theta\}$ is decomposable and $d(\Phi,\Psi_1\sqcup\{\tilde\theta\})<\ve$.
\end{proof}

\section{Symmetric doubly stochastic elements}\label{symm DSE}

A doubly stochastic element is \emph{symmetric} if, together with a morphism $\vp:A\to B$, it contains it's inverse $\vp^{-1}:B\to A$. In this section we show how to split a symmetric DSE of even multiplicity into a DSE and its inverse. First some formal definitions.

\begin{de}
For $\Phi=\{\vp_i:A_i\to B_i:i\}$ a DSE, define its inverse $\Phi^{-1}=\{\vp_i^{-1}:B_i\to A_i:i\}$. Note that $\Phi^{-1}$ is still a DSE of the same multiplicity as $\Phi$. 

A DSE $\Phi$ is called \emph{symmetric} if $\Phi$ is equivalent to $\Phi^{-1}$.
\end{de}

\begin{de}
For a DSE $\Phi$ define its symmetrisation $\mathcal{S}(\Phi)=\Phi\sqcup\Phi^{-1}$. Note that $\mathcal{S}(\Phi)$ is a symmetric DSE of multiplicity twice the multiplicity of $\Phi$.
\end{de}

The goal of this section is to prove that for any DSE of multiplicity $2n$, there exists a DSE of multiplicity $n$ such that its symmetrisation is arbitrarily close to the initial DSE. The proof is similar to the one in the last section, and it will closely follow the same sketch. However, there are some different points, and we have to readapt the lemmas that we used. 

For this section we now fix $\Phi=\{\xi_i\}_i$ a symmetric DSE of multiplicity $2n$. Let $\gr$ be its associated graph $\gr=\bigcup_igraph(\xi_i)$. In this section we assume that $(graph(\xi_i))_i$ are disjoint sets, so that $\nu(\gr)=2n$. This assumption doesn't change the proof in any way. It allows us to work with $\gr\subset E$, instead of working with $M(\Phi):E\to\nz$. We consider that working with a graph provides more intuition, while not simplifying the conceptual proof.

We note that $\gr$ is indeed a graph, i.e. $\gr=flip(\gr)$. Actually a $d$-regular (multi-) graph (every vertex has $d$ neighbours)  and a symmetric DSE of multiplicity $d$ is the exact same information. The first step of the proof is to ``divide" the graph $\gr$ into two disjoint parts.

\begin{de}
A \emph{division} of $\gr$ is a subset $\hh\subset\gr$ such that $\hh$ and $flip(\hh)$ are a partition of $\gr$.
\end{de}

This can be done by selecting a Borel order $``<"$ on $X$ and defining $\hh=\{(x,y):(x,y)\in\gr,x<y\}$, so divisions do exists. From this definition it follows that $\nu(\hh)=n$ for any division. Intuitively, a division puts a direction on each edge in $\gr$. A perfect division would be one for which the in-degree, is equal to the out-degree, is equal to $n$ for each vertex $x\in X$. We now formalise these observations.

Define $d_\hh:X\to\nz$ by $d_\hh(x)=Card\{y:(x,y)\in\hh\}$. By definition $\nu(\hh)=\int_Xd_\hh(x)d\mu(x)=n$. We define the \emph{error} of $\hh$ as $E(\hh)=\int_X|n-d_\hh(x)|d\mu(x)$. Our goal is to construct $\hh$ with arbitrarily small error. We do this by gradually improving the error of $\hh$.

Let $P_0=\{x:d(x)=n\}$, $P_-=\{x:d(x)<n\}$ and $P_+=\{x:d(x)>n\}$. Then $\{P_0,P_-,P_+\}$ is a partition of $X$ and it can be easily checked that $\mu(P_+)\leqslant E(\hh)\leqslant 2n\cdot\mu(P_+)$ and the same inequalities hold for $\mu(P_-)$. We now define the object we want to construct inside $\hh$.

\begin{de}
A \emph{better path} in $\hh$ is a collection of maps $\vp_i:V_{i-1}\to V_i$, $i=1,\ldots,k$ such that $graph(\vp_i)\subset\hh$. Additionaly $V_0,\ldots,V_k$ are disjoint subsets of $X$, $V_0\subset P_+$ and $V_k\subset P_-$. The number $k$ is called the \emph{length} of the path.
\end{de}

A better path is improving the error of $\hh$, by reversing the direction of the edges, as the next proposition shows.

\begin{p}
Let $\hh$ be a division of $\gr$ and let $\{\vp_i:i=1,\ldots k\}$ be a better path in $\hh$. Define $\mathcal{P}=\bigcup_{i=1}^kgraph(\vp_i)$. Then $\hh_1=\hh\sm\mathcal{P}\cup flip(\mathcal{P})$ is a division of $\gr$ and $E(\hh_1)=E(\hh)-2\mu(V_0)$.
\end{p}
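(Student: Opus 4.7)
The plan is to verify two independent assertions: that $\hh_1$ is again a division of $\gr$, and that the error identity $E(\hh_1)=E(\hh)-2\mu(V_0)$ holds.

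For the first claim, I would expand $\hh_1 = (\hh \sm \mathcal{P}) \cup flip(\mathcal{P})$ and $flip(\hh_1) = (flip(\hh) \sm flip(\mathcal{P})) \cup \mathcal{P}$. Using $\mathcal{P} \subset \hh$, their union is $\hh \cup flip(\hh) = \gr$. Disjointness reduces to checking $\mathcal{P} \cap flip(\mathcal{P}) = \emptyset$, which follows because $graph(\vp_i) \subset V_{i-1} \times V_i$ and $flip(graph(\vp_j)) \subset V_j \times V_{j-1}$, while the $V_\ell$ are pairwise disjoint. The remaining cross-intersections in $\hh_1 \cap flip(\hh_1)$ are contained in $\hh \cap flip(\hh) = \emptyset$.

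For the error identity, the plan is to compute the local degree contribution of $\mathcal{P}$. I would write
\[d_{\hh_1}(x) = d_{\hh}(x) - Card\{y : (x,y) \in \mathcal{P}\} + Card\{y : (y,x) \in \mathcal{P}\}.\]
Since $graph(\vp_i) \subset V_{i-1}\times V_i$ and the $V_\ell$ are disjoint, the net change at $x$ is $-1$ on $V_0$ (one outgoing edge via $\vp_1$, none incoming), $+1$ on $V_k$ (one incoming via $\vp_k$, none outgoing), and $0$ elsewhere, either because one in and one out cancel on each interior $V_i$, or because $x \notin \bigcup_\ell V_\ell$ is untouched.

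The final step is to unfold the absolute value inside $E$. Because $V_0 \subset P_+$ we have $d_\hh(x) \geqslant n+1$, so decreasing it by $1$ keeps $d_{\hh_1}(x) \geqslant n$, giving $|n-d_{\hh_1}(x)|=|n-d_\hh(x)|-1$ on $V_0$; symmetrically $V_k \subset P_-$ forces the same identity on $V_k$. Each $\vp_i$ is a measure-preserving partial isomorphism, so $\mu(V_0)=\mu(V_k)$, and integrating over $X$ yields $E(\hh_1)-E(\hh)=-\mu(V_0)-\mu(V_k)=-2\mu(V_0)$. The argument is essentially bookkeeping; the only substantive point is invoking $V_0 \subset P_+$ and $V_k \subset P_-$ to ensure the $\pm 1$ shifts stay on the correct side of $n$ so the absolute values collapse cleanly. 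Without this hypothesis the reversal could create overshoot that would instead inflate the error.
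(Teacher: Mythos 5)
Your proof is correct and follows essentially the same route as the paper's: the same degree bookkeeping ($-1$ on $V_0$, $+1$ on $V_k$, $0$ elsewhere), the same use of $V_0\subset P_+$ and $V_k\subset P_-$ to collapse the absolute values, and $\mu(V_0)=\mu(V_k)$ to conclude. The only cosmetic difference is that you justify $\mathcal{P}\cap flip(\mathcal{P})=\emptyset$ via the disjointness of the $V_\ell$, whereas the paper deduces it directly from $\mathcal{P}\subset\hh$ and $\hh\cap flip(\hh)=\emptyset$; both are fine.
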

\begin{proof}
As $\mathcal{P}\subset\hh$ it follows that $\mathcal{P}\cap flip(\mathcal{P})=\es$. Now we can see that $\hh_1$ and $flip(\hh_1)$ are a partition of $\gr$. Computing the degree, we get $d_{\hh_1}(x)=d_\hh(x)$ for $x\in X\sm(V_0\cup V_k)$, $d_{\hh_1}(x)=d_\hh(x)-1$ for $x\in V_0$ and $d_{\hh_1}(x)=d_\hh(x)+1$ for $x\in V_k$. As $V_0\subset P_+$ and $V_k\subset P_-$, $E(\hh_1)=E(\hh)-\mu(V_0)-\mu(V_k)$. As $\mu(V_0)=\mu(V_k)$ we have the conclusion.
\end{proof}

We construct better paths in $\hh$ with the help of the next two lemmas. Better paths are the analogue of extensions used in the last section.

\begin{lemma}\label{smain_lemma}
Let $A\subset P_+$, $B\subset P_0\cup P_+$, $T\subset X$, where $A\cap B=\es$. Then there exists $\vp:V\to W$ such that $graph(\vp)\subset\hh$, $V\subset A\cup B$, $W\subset (A\cup B\cup T)^c$ and:
\[\mu(V)\geqslant\frac{\mu(A)}{2n}-\frac{\mu(T)}2.\]
\end{lemma}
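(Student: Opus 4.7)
The plan is to take $\vp:V\to W$ to be a piece maximal (via a Zorn's lemma argument in the measurable category) among those with $graph(\vp)\subset\hh$, $V\subset A\cup B$, and $W\subset(A\cup B\cup T)^c$. By maximality, there is no positive-measure set of edges $(x,y)\in\hh$ with $x\in(A\cup B)\setminus V$ and $y\in(A\cup B\cup T\cup W)^c$, for otherwise such a set would yield a nontrivial partial isomorphism that could be appended to $\vp$ without breaking any of its defining constraints.

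The heart of the argument is then a double count of $\nu\bigl(\hh\cap((A\cup B)\times X)\bigr)$. The left count uses that $d_\hh$ is integer-valued, so $d_\hh(x)\geqslant n+1$ on $A\subset P_+$ and $d_\hh(x)\geqslant n$ on $B\subset P_0\cup P_+$, giving
\[\nu\bigl(\hh\cap((A\cup B)\times X)\bigr)=\int_{A\cup B}d_\hh(x)\,d\mu(x)\geqslant(n+1)\mu(A)+n\mu(B).\]
For the right count I would partition the codomain as $A\sqcup B\sqcup(T\setminus(A\cup B))\sqcup(A\cup B\cup T)^c$ and exploit $d_{flip(\hh)}(y)=2n-d_\hh(y)$: this yields $d_{flip(\hh)}(y)\leqslant n-1$ for $y\in A$, $d_{flip(\hh)}(y)\leqslant n$ for $y\in B$, and the trivial bound $\leqslant 2n$ on $T\setminus(A\cup B)$. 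For endpoints in $(A\cup B\cup T)^c$ I split by source: edges starting in $V$ contribute at most $2n\mu(V)$, while edges starting in $(A\cup B)\setminus V$ must, by maximality, land in $W$, contributing at most $2n\mu(W)=2n\mu(V)$, for a total of $4n\mu(V)$.

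Combining these bounds gives
\[(n+1)\mu(A)+n\mu(B)\leqslant(n-1)\mu(A)+n\mu(B)+2n\mu(T)+4n\mu(V),\]
which collapses to $2\mu(A)\leqslant 2n\mu(T)+4n\mu(V)$ and rearranges to the claimed inequality $\mu(V)\geqslant\mu(A)/(2n)-\mu(T)/2$ (the bound being vacuous when the right-hand side is negative, in which case one may take $\vp$ empty).

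The main obstacle I expect is the rigorous measurable realisation of ``maximality'': producing the maximal $\vp$ and deducing that the failure-to-extend condition holds $\nu$-almost everywhere, rather than only pointwise. This is essentially a repetition of the argument already used in Lemma \ref{main_lemma}, so no new difficulty is introduced; it is merely a matter of invoking the same measurable selection machinery in the slightly different setting where the source is $A\cup B$ (not just $A$) and the target is $(A\cup B\cup T)^c$.
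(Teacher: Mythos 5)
Your proposal is correct and follows essentially the same route as the paper: take a maximal piece $\vp:V\to W$ subject to the stated constraints, observe that maximality forces $N((A\cup B)\setminus V)\subset A\cup B\cup T\cup W$, and compare the left and right counting measures of the relevant portion of $\hh$ to obtain $2\mu(A)\leqslant 2n\mu(T)+4n\mu(V)$. The only cosmetic difference is that you double-count all of $\hh\cap((A\cup B)\times X)$ and absorb the edges emanating from $V$ into an extra $2n\mu(V)$ term on the right, whereas the paper counts only $\hh\cap([(A\cup B)\setminus V]\times X)$ and subtracts $2n\mu(V)$ on the left; the resulting inequality is identical.
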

\begin{proof}
Let $\vp:V\to W$ be a maximal such piece. Then $N((A\cup B)\sm V)\subset (A\cup B\cup T\cup W)$. Consider the left and right counting measure of $([(A\cup B)\sm V]\times X)\cap\hh$ to get:
\[(n+1)\mu(A)+n\mu(B)-2n\mu(V)\leqslant (n-1)\mu(A)+n\mu(B)+2n\mu(T)+2n\mu(W).\]
As $\mu(V)=\mu(W)$, it follows that $\mu(A)\leqslant n\mu(T)+2n\mu(V)$, hence the conclusion.
\end{proof}

\begin{lemma}
Let $\psi_i:V_{i-1}\to W_{i}$ $i=1,\ldots,j$, $graph(\psi_i)\subset\hh$, be such that $W_0,W_1,\ldots, W_j$ are disjoint subsets, where $W_0=V_0$. Additionaly $W_i\subset P_0\cup P_+$ for $i>0$ and $V_i\subset W_0\cup\ldots\cup W_i$ for any $i\geqslant1$. Assume that $V_0\subset P_+$ and $W_j\not\subset (P_0\cup P_+)$. Then $\hh$ admits a better path of length smaller or equal to $j$.
\end{lemma}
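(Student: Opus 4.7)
The plan is to mimic the analogous result from the previous section (the lemma used in the proof of Proposition \ref{prop:large_extension}), working backwards from a positive-measure subset of $W_j \cap P_-$ toward $W_0 = V_0 \subset P_+$ via successive preimages.

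First, set $T_1 = W_j \cap P_-$; by hypothesis $\mu(T_1) > 0$. Set $i_0 = j$ and $\tilde{S}_0 = T_1$. We build an iterative construction: at step $r \geq 1$, assuming we have a set $\tilde{S}_{r-1}$ of positive measure with $\tilde{S}_{r-1} \subset W_{i_{r-1}}$ and $i_{r-1} > 0$, use the fact that $\tilde{S}_{r-1} \subset W_{i_{r-1}} = \psi_{i_{r-1}}(V_{i_{r-1}-1})$, so
\[\psi_{i_{r-1}}^{-1}(\tilde{S}_{r-1}) \subset V_{i_{r-1}-1} \subset W_0 \cup W_1 \cup \ldots \cup W_{i_{r-1}-1}.\]
Since $\psi_{i_{r-1}}$ preserves the measure, the preimage has positive measure, so there exists $i_r < i_{r-1}$ with $\mu\bigl(\psi_{i_{r-1}}^{-1}(\tilde{S}_{r-1}) \cap W_{i_r}\bigr) > 0$; set $\tilde{S}_r$ to be this intersection. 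Stop once $i_r = 0$.

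The indices $i_0 > i_1 > i_2 > \cdots$ are strictly decreasing non-negative integers starting at $j$, so we reach $i_r = 0$ for some $r \leq j$. At that point $\tilde{S}_r \subset W_0 = V_0 \subset P_+$. Now read off the better path by walking the chain forwards: put $U_0 = \tilde{S}_r$, and define recursively $U_s = \psi_{i_{r-s}}(U_{s-1})$ for $s = 1, \ldots, r$. The construction of the $\tilde{S}$'s ensures $U_{s-1} \subset \tilde{S}_{r-s+1} \subset \psi_{i_{r-s}}^{-1}(\tilde{S}_{r-s})$, which lies in the domain of $\psi_{i_{r-s}}$, and then $U_s \subset \tilde{S}_{r-s}$. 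In particular $U_r \subset \tilde{S}_0 = T_1 \subset P_-$, and each $U_s$ sits inside $W_{i_{r-s}}$, so the sets $U_0, U_1, \ldots, U_r$ are disjoint because the $W_i$'s are disjoint by hypothesis. Each restriction $\varphi_s := \psi_{i_{r-s}}|_{U_{s-1}}$ has graph inside $\hh$, so $\{\varphi_s : s = 1, \ldots, r\}$ is a better path of length $r \leq j$.

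The main technical obstacle is purely bookkeeping: ensuring at every step that (i) $\tilde{S}_r$ really does lie in the domain of the next $\psi$ we want to pull back through, and (ii) the forward-direction intermediate sets $U_s$ live in the pairwise disjoint $W_{i_{r-s}}$. Both follow directly from the hypothesis $V_i \subset W_0 \cup \cdots \cup W_i$ and the disjointness of $W_0, \ldots, W_j$; the strict decrease of the index sequence $(i_r)$ guarantees termination in at most $j$ steps, which is exactly the bound required.
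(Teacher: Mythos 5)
Your proof is correct and follows essentially the same route as the paper: pull a positive-measure subset of $W_j\cap P_-$ backwards through the maps $\psi_{i}$, landing at each stage in some $W_{i_r}$ with a strictly decreasing index sequence that must hit $0$ within $j$ steps, then read the chain forwards as a better path. Your write-up is in fact slightly more careful than the paper's in the final step (explicitly verifying that the forward images $U_s$ are disjoint and lie in the domains of the successive maps), but there is no substantive difference.
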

\begin{proof}
Let $T_1=W_j\cap P_-$. By hypothesis $\mu(T_1)>0$. Then $\psi_j^{-1}(T_1)\subset V_{j-1}\subset W_0\cup\ldots\cup W_{j-1}$. It follows that there exists $i_1<j$ such that $\mu(\psi_j^{-1}(T_1)\cap W_{i_1})>0$.

Let $T_2=\psi_j^{-1}(T_1)\cap W_{i_1}$, so $T_2\subset W_{i_1}$. If $i_1=0$ then we are done, as $\psi_j$ restricted to $T_2$ is a better path of $\hh$ of lenght $1$. If $i_1>1$ then $\psi_{i_1}^{-1}(T_2)\subset V_{i_1-1}\subset W_0\cup\ldots\cup W_{i_1-1}$. Hence, there exists $i_2<i_1$ such that $\mu(T_3)>0$, where $T_3=\psi_{i_1}^{-1}(T_2)\cap W_{i_2}$.

Inductively define $T_{r+1}=\psi_{i_{r-1}}^{-1}(T_r)\cap W_{i_r}$ such that $\mu(T_{r+1})>0$. If $i_r>0$ then $T_{r+1}\subset W_{i_r}=\psi_{i_r}(V_{i_r-1})$, so $\psi_{i_r}^{-1}(T_{r+1})\subset V_{i_r-1}\subset W_0\cup\ldots\cup W_{i_r-1}$. Then there exists $i_{r+1}<i_r$ such that $T_{r+2}=\psi_{i_r}^{-1}(T_{r+1})\cap W_{i_{r+1}}$ and $\mu(T_{r+2})>0$. 

If $i_r=0$ then $T_{r+1}\subset V_0\subset P_+$ and this set $T_{r+1}$ transported with maps $\psi_{i_{r-1}},\ldots,\psi_{i_0}$ is a better path of $\hh$ of length $r$ (where $i_0=j$). As $0=i_r<i_{r-1}<\ldots<i_1<i_0=j$ it follows that $r\leqslant j$.
\end{proof}

We are now ready to prove the key proposition of this section. 

\begin{p}
Let $\hh$ be a division of $\gr$. Then there exists another division $\hh_1$ such that $E(\hh_1)<E(\hh)-(\frac{E(\hh)}{7n^3+E(\hh)})^2$.
\end{p}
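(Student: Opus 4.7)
The plan is to mirror the proof of Proposition \ref{prop:large_extension}, with \emph{better paths} playing the role of \emph{extensions} and the flip-a-better-path proposition above playing the role of extending a piece. Set $k = \lfloor 7n^3/E(\hh) \rfloor$ and choose a \emph{maximal} family of better paths $\{\vp_i^j : V_{i-1}^j \to V_i^j \mid 1 \leq i \leq k(j)\}_{j}$ in $\hh$ of length at most $k$, with pairwise disjoint vertex sets. Let $S = \bigsqcup_j V_0^j \subset P_+$ and $V_e = \bigsqcup_{i,j} V_i^j$, so that $\mu(V_e) \leq (k+1)\mu(S)$. Applying the flip-a-better-path proposition simultaneously to all members of the family (permissible because they share no vertices, hence their flips commute) produces a division $\hh_1$ with $E(\hh_1) = E(\hh) - 2\mu(S)$. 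Hence it is enough to show that $\mu(S) > \tfrac{1}{2}(E(\hh)/(7n^3+E(\hh)))^2$.

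Suppose, for contradiction, that $\mu(S)$ is strictly smaller than this bound. Mimicking the construction in Proposition \ref{prop:large_extension}, I inductively build disjoint subsets $W_1, \ldots, W_{k+1} \subset (P_0 \cup P_+) \setminus V_e$ together with pieces $\psi_r : V_{r-1} \to W_r$ in $\hh$ satisfying the hypotheses of the lemma directly preceding this proposition. The base step applies Lemma \ref{smain_lemma} with $A = P_+ \setminus V_e$, $B = \emptyset$, $T = V_e$, yielding $V_0 \subset P_+ \setminus V_e$ and $\mu(W_1) \geq \mu(P_+ \setminus V_e)/(2n) - \mu(V_e)/2$. For the inductive step, Lemma \ref{smain_lemma} is applied with $A = V_0$, $B = W_1 \cup \cdots \cup W_r$, and $T$ equal to $V_e$ enlarged so as to enforce disjointness; this yields $\psi_{r+1}: V_r \to W_{r+1}$ with $V_r \subset V_0 \cup W_1 \cup \cdots \cup W_r$ and $\mu(W_{r+1}) \geq \mu(V_0)/(2n) - \mu(V_e)/2$. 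Crucially, if any $W_r$ were to intersect $P_-$, restricting $\psi_r$ to the preimage of $W_r \cap P_-$ and invoking the lemma directly preceding this proposition would produce a better path of length at most $r \leq k$ entirely disjoint from $V_e$ (it uses only the $\psi_i$ and subsets of $V_0, W_1, \ldots, W_{r-1}$), violating the maximality of our family.

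Finally, since $W_1, \ldots, W_{k+1}$ are disjoint subsets of $P_0 \cup P_+ \subset X$, we get $(k+1)\mu(W_2) \leq 1$. Combined with $\mu(V_e) \leq (k+1)\mu(S)$ and $\mu(P_+) \geq E(\hh)/(2n)$, the same algebraic manipulation as at the end of the proof of Proposition \ref{prop:large_extension} converts this into $\mu(S) > 1/(k+1)^2 \geq (E(\hh)/(7n^3 + E(\hh)))^2$, contradicting our standing assumption. The main obstacle is to propagate the constants cleanly: Lemma \ref{smain_lemma} loses a factor $1/(2n)$ at each application, in contrast with the $1/2$ lost by Lemma \ref{main_lemma}, and the input bound $\mu(P_+) \geq E(\hh)/(2n)$ introduces a further factor of $n$, together accounting for the jump from $7n$ in Proposition \ref{prop:large_extension} to $7n^3$ here.
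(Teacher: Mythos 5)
Your proposal follows the paper's proof step for step: a maximal family of better paths of bounded length with disjoint vertex sets, the iterative use of Lemma \ref{smain_lemma} to manufacture disjoint sets $W_1,\dots,W_{k+1}$ that are forced into $P_0\cup P_+$ by maximality together with the preceding lemma, the simultaneous flip giving $E(\hh_1)=E(\hh)-2\mu(S)$, and the counting bound $(k+1)\mu(W_2)\leqslant 1$. The one point where you genuinely deviate is the normalisation of the cut-off $k$, and that deviation breaks the final step. You take $k=\lfloor 7n^3/E(\hh)\rfloor$ and then appeal to ``the same algebraic manipulation'' as in Proposition \ref{prop:large_extension}, which here reads
\[
4n^2\ \geqslant\ (k+1)\mu(P_+)\ -\ (k+1)^2(2n^2+n+1)\mu(S).
\]
To squeeze a positive lower bound on $\mu(S)$ out of this you need $(k+1)\mu(P_+)$ to exceed $4n^2$ by a definite margin; the paper's choice $k=\lfloor 7n^2/\mu(P_+)\rfloor$ guarantees $(k+1)\mu(P_+)>7n^2$, which is what drives the contradiction. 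The only relation you have available between $\mu(P_+)$ and $E(\hh)$ is $\mu(P_+)\geqslant E(\hh)/(2n)$, so your choice of $k$ only gives $(k+1)\mu(P_+)>7n^2/2$, which is \emph{less} than $4n^2$. In the extremal case where $d_\hh=2n$ a.e.\ on $P_+$, so that $\mu(P_+)=E(\hh)/(2n)$ exactly, the displayed inequality is consistent with $\mu(S)=0$ and no contradiction is obtained; your conclusion $\mu(S)>1/(k+1)^2$ does not follow from what you have established.

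The repair is exactly the paper's device: normalise $k$ by $\mu(P_+)$ rather than by $E(\hh)$ (take $k=\lfloor 7n^2/\mu(P_+)\rfloor$) and convert to $E(\hh)$ only in the last line via $k\leqslant 7n^2/\mu(P_+)\leqslant 14n^3/E(\hh)$, adjusting the constant in the statement accordingly; equivalently, double your $k$. (The constant-chasing here is delicate in the published proof as well, which silently uses $7n^2/\mu(P_+)\leqslant 7n^3/E(\hh)$ where only $14n^3/E(\hh)$ is justified, so the precise constant $7n^3$ is not sacred --- but as written your version loses the contradiction entirely, not just a factor of two in the final estimate.) Everything else in your argument, including the disjointness bookkeeping that lets you flip the whole family at once and the bound $\mu(V_e)\leqslant(k+1)\mu(S)$, is correct and coincides with the paper's.
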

\begin{proof}
Recall that $P_0=\{x:d_\hh(x)=n\}$, $P_-=\{x:d(x)<n\}$ and $P_+=\{x:d(x)>n\}$. Let $k=\lfloor\frac{7n^2}{\mu(P_+)}\rfloor$. 

Let $\{\vp_i^j:T_{i-1}^j\to T_{i}^j|i=1,\ldots,k(j)\}_j$ be a maximal collection of better paths of $\hh$ of length smaller or equal to $k$. This means that we require that the sets $(T_i^j)_{i,j}$ are disjoint. Additionally $k(j)\leqslant k$ for any $j$ and there is no extra better path $\{\vp_i\}_i$ of $\hh$ that can be added to the family without breaking at least one of these properties.

Let $T=\cup_jT_0^j$ and $T_e=\cup_{i,j}T_i^j$. Then $T\subset P_+$. As the maximal length of any better path is $k$, we have $\mu(T_e)\leqslant (k+1)\mu(T)$.

Using Lemma \ref{smain_lemma} for $(P_+\sm T_e)$, $\es$ and $T_e$, we deduce that there exists $\psi_1:W_0\to W_1$, $graph(\psi_0)\subset\hh$ such that $W_0\subset (P_+\sm T_e)$, $W_1\subset X\sm (P_+\cup T_e)$ and 
\[\mu(W_0)\geqslant\frac{\mu(P_+\sm T_e)}{2n}-\frac{\mu(T_e)}2.\]
If $W_1\not\subset P_0\cup P_+$ then we can restrict $\psi_1$ to a better path of length $1$, contradicting the maximality of the family $\{\vp_i^j\}_j$. It follows that $W_1\subset P_0\cup P_+$.

Now we apply Lemma \ref{smain_lemma} to the sets $W_0$, $W_1$ and $T_e$ to deduce the existence of $\psi_2:V_1\to W_2$, $graph(\psi_2)\subset\hh$ such that $V_1\subset W_0\cup W_1$, $W_2\subset X\sm(T_e\cup W_0\cup W_1)$ and:
\[\mu(W_2)\geqslant\frac{\mu(W_0)}{2n}-\frac{\mu(T_e)}{2}.\]
By the previous lemma and the maximality of the family $\{\vp_i^j\}_j$ we get $W_2\subset (P_0\cup P_+)$.

Inductively use Lemma \ref{smain_lemma} to the sets $W_0$, $W_1\cup\ldots\cup W_{j-1}$, and $T_e$ to deduce that there exists $\psi_j:V_{j-1}\to W_j$, $graph(\psi_j)\subset\hh$, such that $V_{j-1}\subset W_0\cup\ldots\cup W_{j-1}$, $W_j\subset X\sm(W_0\cup\ldots\cup W_{j-1}\cup T_e)$ and
\[\mu(W_j)\geqslant\frac{\mu(W_0)}{2n}-\frac{\mu(T_e)}{2}.\]
As long as $j\leqslant k$ we can use the previous lemma to deduce that $W_j\subset(P_0\cup P_+)$. Sets $W_0,\ldots,W_k$ are disjoint, so $1\geqslant\sum_j\mu(W_j)\geqslant(k+1)\mu(W_2)$. But:
\[\mu(W_2)\geqslant\frac{\mu(W_0)}{2n}-\frac{\mu(T_e)}{2}\geqslant\frac{\mu(P_+\sm T_e)}{4n^2}-\frac{\mu(T_e)}{4n}-\frac{\mu(T_e)}2=\frac{\mu(P_+)}{4n^2}-\frac{(2n^2+n+1)\mu(T_e)}{4n^2}.\]
As $\mu(T_e)\leqslant (k+1)\mu(T)$, we get:
\[4n^2\geqslant(k+1)\mu(P_+)-(k+1)^2(2n^2+n+1)\mu(T).\]
\begin{align*}
4n^2&\geqslant(k+1)\mu(P_+)-(k+1)^2(2n^2+n+1)\mu(T)>7n^2-(k+1)^2(2n^2+n+1)\mu(T)\\
\mu(T)&>\frac{3n^2}{(2n^2+n+1)(k+1)^2}>\frac1{(k+1)^2}>\big(\frac{E(\hh)}{7n^3+E(\hh)}\big)^2.
\end{align*}
We used $k\leqslant\frac{7n^2}{\mu(P_+)}\leqslant\frac{7n^3}{E(\hh)}$ for the last inequality. So, by replacing $graph(\vp_i^j)$ with $graph((\vp_i^j)^{-1})$ for each $i,j$, we get the division to satisfy the required inequality.
\end{proof}

\begin{te}
Let $\Psi$ be a symmetric DSE of multiplicity $2n$, $\gr$ its associated graph, and $\ve>0$. Then there exists $\hh$ a division of $\gr$ such that $E(\hh)<\ve$.
\end{te}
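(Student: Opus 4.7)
The plan is to iterate the preceding proposition, which furnishes a quantitative improvement of any given division. The argument exactly parallels the passage from Proposition \ref{prop:large_extension} to Theorem \ref{pass_to_limit} in the previous section.

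First, I would exhibit an initial division $\hh_0$ of $\gr$: as noted in the text preceding Lemma \ref{smain_lemma}, fixing any Borel total order $<$ on $X$ and setting $\hh_0=\{(x,y)\in\gr : x<y\}$ produces a valid division, with no a priori control on $E(\hh_0)$ required.

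Next, I would apply the previous proposition inductively to construct a sequence $(\hh_k)_{k\geqslant 0}$ of divisions of $\gr$ satisfying
\[
E(\hh_{k+1}) < E(\hh_k) - \left(\frac{E(\hh_k)}{7n^3 + E(\hh_k)}\right)^2.
\]
The sequence $\big(E(\hh_k)\big)_k$ is nonnegative and strictly decreasing, hence convergent to some $l\geqslant 0$. Passing to the limit in the recursive inequality, using continuity of the map $x\mapsto \big(x/(7n^3+x)\big)^2$ on $[0,\infty)$, gives $l \leqslant l - \big(l/(7n^3+l)\big)^2$, whence $\big(l/(7n^3+l)\big)^2 \leqslant 0$, forcing $l=0$. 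Therefore $E(\hh_k)<\ve$ for every sufficiently large $k$, and we may take $\hh:=\hh_k$ for any such $k$. The main analytic content is already concentrated in the previous proposition, so no new obstacle is expected in this limiting step; note also that the iteration will in general not terminate with $E(\hh)=0$, but the quantitative bound suffices for any prescribed $\ve>0$.
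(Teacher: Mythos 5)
Your proposal is correct and follows essentially the same route as the paper, which simply instructs the reader to proceed as in the proof of Theorem \ref{pass_to_limit}: iterate the preceding proposition starting from the division induced by a Borel order, observe that the errors form a decreasing sequence, and pass to the limit in the recursive inequality to force the limit to be $0$. Your explicit treatment of the initial division and of the limiting step matches the intended argument.
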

\begin{proof}
Procede as in the proof of Theorem \ref{pass_to_limit}.
\end{proof}

\begin{te}\label{th:symmDSE}
Let $\Psi$ be a symmetric DSE of multiplicity $2n$ and $\ve>0$. Then there exists $\Phi$ a DSE of multiplicity $n$ such that $d(\Psi,\mathcal{S}(\Phi))<\ve$.
\end{te}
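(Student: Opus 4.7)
The plan is to apply the preceding theorem to obtain a division $\hh\subset\gr$ with $E(\hh)<\delta$ for $\delta=\ve/(12n)$, and then to promote $\hh$ to an honest DSE of multiplicity $n$ by a small surgery on the ``bad'' set $B:=\{x\in X:d_\hh(x)\neq n\}$, which satisfies $\mu(B)\le E(\hh)<\delta$. If $\mu(B)=0$, then $\hh$ itself, decomposed into pieces via Theorem \ref{Kechris}, is already a DSE of multiplicity $n$ whose symmetrisation has matrix $\chi_\hh+\chi_{flip(\hh)}=\chi_\gr=M(\Psi)$, and there is nothing more to do.

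In general set $P_0:=X\sm B$ and $\hh_0:=\hh\cap(P_0\times P_0)$. On $P_0$ the row-deficit $s(x):=n-d_{\hh_0}(x)$ counts $\hh$-edges from $x$ into $B$, and the column-deficit $s^{\mathrm{col}}(y):=n-d_{\hh_0}^{\mathrm{in}}(y)$ counts $\hh$-edges from $B$ into $y$. Applying the left--right counting identity to $\hh_0$ inside any $E$-invariant Borel subset $C$ yields $\int_{C\cap P_0}s\,d\mu=\int_{C\cap P_0}s^{\mathrm{col}}\,d\mu$, with common total $\int_{P_0}s\,d\mu\le 2n\mu(B)\le 2n\delta$. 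I would then build pieces $\beta_j\in[[E]]$ with graphs in $P_0\times P_0$, domains $U_j$ and images $V_j$, such that $\sum_j\chi_{U_j}=s$ and $\sum_j\chi_{V_j}=s^{\mathrm{col}}$; the component-wise equality of the marginals ensures that a $\nz$-valued function on $E|_{P_0}$ with these prescribed row and column sums exists, and Theorem \ref{Kechris} decomposes it into pieces. Setting $M':=\chi_{\hh_0}+\sum_j\chi_{graph(\beta_j)}$ restores row and column sums to $n$ on $P_0$, and adjoining an arbitrary DSE $M_B$ of multiplicity $n$ supported on $B$ (e.g. $n$ copies of $\id|_B$) gives a DSE $\Phi$ of multiplicity $n$ with matrix $M(\Phi)=M'+M_B$.

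For the distance, split $E$ as $(P_0\times P_0)\sqcup(P_0\times B)\sqcup(B\times P_0)\sqcup(B\times B)$. On $P_0\times P_0$ the matrix $M(\S(\Phi))$ equals $M(\Psi)$ plus the symmetrised contribution $\sum_j(\chi_{graph(\beta_j)}+\chi_{graph(\beta_j^{-1})})$ of $\nu$-measure at most $4n\delta$. On each of the other three pieces both $M(\Psi)$ and $M(\S(\Phi))$ have $\nu$-measure $O(n\mu(B))$, contributing $O(n\delta)$ each. Summing the four contributions gives $d(\Psi,\S(\Phi))\le 12n\delta=\ve$.

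The main technical obstacle is the construction of the pieces $\beta_j$: one must simultaneously match the pointwise-different discrepancies $s$ and $s^{\mathrm{col}}$ by partial isomorphisms whose graphs lie in $E$ and whose total $\nu$-measure is linear in $E(\hh)$. The crucial input is the equality of $\int s$ and $\int s^{\mathrm{col}}$ on every $E$-invariant Borel subset, which is exactly the left--right counting principle used throughout the paper.
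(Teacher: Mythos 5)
Your proposal is correct and follows essentially the same route as the paper: both obtain a division $\hh$ with $E(\hh)$ small from the preceding theorem and then perform a surgery of $\nu$-measure $O(n\,E(\hh))$ to restore exact in- and out-degree $n$ (the paper trims only the excess out-edges at $P_+$ and in-edges at $P_-$ and reconnects, while you delete everything meeting the bad set and reconnect; this is the same idea with different bookkeeping and comparable constants). The one caveat is your insistence that the correction pieces $\beta_j$ have graphs inside $E|_{P_0}$: justifying their existence from the equality of $\int s$ and $\int s^{\mathrm{col}}$ on invariant sets requires an extra lemma (and can fail for relations with finite classes), whereas the paper sidesteps this by taking the reconnecting morphisms to be arbitrary measure-preserving partial isomorphisms, implicitly enlarging the ambient relation, which you may as well do too.
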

\begin{proof}
Let $\gr$ be the associated graph of $\Psi$ and let $\hh$ be a division of $\gr$ such that $E(\hh)<\ve/4$. We can assume that there is no map $\vp:A\to B$ with $A\subset P_+$ and $B\subset P_-$, i.e. there is no better path of length $1$. If there is such a path, just replace $graph(\vp)$ by $graph(\vp^{-1})$.

We first construct $\hh_1\subset\hh$ such that $\nu(\hh\sm\hh_1)=E(\hh)$ and the in-degree and out-degree of each vertex in $\hh_1$ is less than $n$. Formally this can be written as $d_{\hh_1}(x)\leqslant n$ and $d_{flip(\hh_1)}(x)\leqslant n$ for each $x\in X$.

For this construction, choose a sequence of maps $\vp_i:A_i\to B_i$, with $A_i\subset P_+$ and consequently $B_i\subset (P_-)^c$, such that $graph(\vp_i)\subset\hh$ and $\sum_i\chi_{A_i}(x)=d_\hh(x)-n$ for each $x\in P_+$. As $\int_{P_+}(d_\hh(x)-n)d\mu(x)=\frac12E(\hh)$ it follows that $\sum_i\mu(A_i)=\frac12E(\hh)$.

Symmetrically, choose a sequence of maps $\psi_i:C_i\to D_i$, with $D_i\subset P_-$, so that $C_i\subset P_+^c$, such that $graph(\psi_i)\subset\hh$ and $\sum_i\chi_{D_i}(x)=n-d_\hh(x)$ for each $x\in P_-$. Then $\sum_i\mu(D_i)=\frac12E(\hh)$.

Now $\hh_1$ can be defined as $\hh\sm(\cup_igraph(\vp_i))\sm(\cup_igraph(\psi_i))$. Choose an arbitrary sequence of measure preserving partial morphisms $\theta_i:S_i\to T_i$ such that $\sum_i\chi_{S_i}=\sum_i\chi_{C_i}$ and $\sum_i\chi_{T_i}=\sum_i\chi_{B_i}$. Also, choose measure preserving partial morphisms $\delta_i:V_i\to W_i$ such that $\sum_i\chi_{V_i}=\sum_i\chi_{D_i}$ and $\sum_i\chi_{W_i}=\sum_i\chi_{A_i}$. Define $\hh_2=\hh_1\cup(\cup_igraph(\theta_i)\cup(\cup_igraph(\delta_i))$. Then $\nu(\hh_2\Delta\hh)=2E(\hh)$ and $d_{\hh_2}(x)=d_{flip(\hh_2)}(x)=n$ for all $x\in X$. This implies that $\nu(\gr\Delta(\hh_2\cup flip(\hh_2))\leqslant 4E(\hh)<\ve$. Using Theorem \ref{Kechris} we can transform $\hh_2$ into a DSE of multiplicity $n$ to finish the proof.
\end{proof}

\subsection{Applications to Borel graphs}

Putting together Theorems \ref{th:decDSE} and \ref{th:symmDSE} we get that any $2n$-regular graph almost contains a measure-preserving automorphism.

\begin{p}
Let $\gr$ be a measure preserving, $2n$-regular graph and $\ve>0$. Then there exists $\theta:A\to B$ such that $graph(\theta)\subset\gr$ and $\mu(A)>1-\ve$.
\end{p}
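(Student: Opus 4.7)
The plan is to combine Theorems \ref{th:decDSE} and \ref{th:symmDSE}, as anticipated in the opening sentence of the subsection. First I would identify the $2n$-regular graph $\gr$ with a symmetric DSE $\Psi$ of multiplicity $2n$: by Theorem \ref{Kechris}, $\gr$ decomposes as a countable disjoint union of graphs of partial isomorphisms $\xi_i$, and the $2n$-regularity together with $flip(\gr)=\gr$ says exactly that $\Psi=\{\xi_i\}$ is doubly stochastic of multiplicity $2n$ and symmetric. Proposition \ref{equiv to finite} lets me assume $\Psi$ is finite, and disjointness of the $graph(\xi_i)$ gives $M(\Psi)=\chi_\gr$.

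Next, fix $\delta>0$ to be chosen at the end. By Theorem \ref{th:symmDSE} there is a DSE $\Phi$ of multiplicity $n$ with $d(\Psi,\mathcal{S}(\Phi))<\delta$, and by Theorem \ref{th:decDSE} there is a decomposable DSE $\Phi'$ of multiplicity $n$ with $d(\Phi,\Phi')<\delta$. Since $M(\mathcal{S}(\Phi))(x,y)=M(\Phi)(x,y)+M(\Phi)(y,x)$ and $\nu$ is $flip$-invariant, one immediately checks $d(\mathcal{S}(\Phi),\mathcal{S}(\Phi'))\leqslant 2d(\Phi,\Phi')$, so the triangle inequality yields $d(\Psi,\mathcal{S}(\Phi'))<3\delta$.

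Since $\Phi'$ is decomposable, write it (up to equivalence) as $\{\theta_1,\ldots,\theta_n\}\subset[E]$, so that $\mathcal{S}(\Phi')\sim\{\theta_j,\theta_j^{-1}:j=1,\ldots,n\}$. Expanding the distance, the inequality
\[\int\Bigl|\chi_\gr-\sum_{j=1}^n\bigl(\chi_{graph(\theta_j)}+\chi_{graph(\theta_j^{-1})}\bigr)\Bigr|\,d\nu<3\delta\]
controls in particular, via its positive part restricted to $\gr^c$, the total $\nu$-mass of edges of the $\theta_j$'s and their inverses lying outside $\gr$. Using $flip(\gr)=\gr$ and the $flip$-invariance of $\nu$, the contributions of $\theta_j$ and $\theta_j^{-1}$ coincide, so $\sum_j\nu(graph(\theta_j)\setminus\gr)\leqslant \tfrac{3\delta}{2}$, and pigeonhole produces an index $j_0$ with $\nu(graph(\theta_{j_0})\setminus\gr)\leqslant \tfrac{3\delta}{2n}$.

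Finally, set $A=\{x\in X:(x,\theta_{j_0}(x))\in\gr\}$ and $\theta=\theta_{j_0}|_A$. Then $graph(\theta)\subset\gr$ by construction, and $\mu(X\setminus A)=\nu(graph(\theta_{j_0})\setminus\gr)\leqslant \tfrac{3\delta}{2n}$. Choosing $\delta<\tfrac{2n\ve}{3}$ at the outset yields $\mu(A)>1-\ve$, as required. There is no genuine obstacle here: the two preceding main theorems do the heavy lifting, and the only point requiring attention is the bookkeeping that converts an $L^1$-approximation of $\chi_\gr$ by a symmetrised decomposable DSE into the existence of one specific element of $[E]$ whose graph sits almost entirely inside $\gr$.
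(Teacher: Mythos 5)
Your proof is correct and is exactly the argument the paper intends: the paper offers no written proof beyond the sentence ``Putting together Theorems \ref{th:decDSE} and \ref{th:symmDSE}...'', and your chain (graph $\leftrightarrow$ symmetric DSE, approximate by $\mathcal{S}(\Phi)$, approximate $\Phi$ by a decomposable $\Phi'$, control $d(\mathcal{S}(\Phi),\mathcal{S}(\Phi'))$, then pigeonhole one $\theta_{j_0}$ and restrict it to $A=\{x:(x,\theta_{j_0}(x))\in\gr\}$) supplies precisely the missing bookkeeping. No gaps.
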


\section{Sofic doubly stochastic elements}

\subsection{Preliminaries} We quickly recall the notion of sofic equivalence relation. For a more detailed introduction to the subject, the reader can consult \cite{Pa1}. Roughly speaking, an equivalence relation $E$ is \emph{sofic} if infinite matrices over $E$ can be locally approximated by finite matrices. We make this definition more concrete by introducing the algebra $M_f(E)$ (of infinite matrices over $E$) and by discussing ultraproducts of matrix algebras. We begin with the latter. 

Fix a sequence $\{m_k\}_k\subset\nz$ such that $\lim_km_k=\infty$ and $\omega$ a free ultrafilter on $\nz$. Let $M_m$ be the *-algebra of matrices in dimension $m$, endowed with the trace: $Tr(a)=\frac1m\sum_{i=1}^ma(i,i)$. Note that $Tr(Id_m)=1$, independent of $m\in\nz$. Recall that $||x||_2=\sqrt{Tr(x^*x)}$.

\begin{de}
The \emph{matrix ultraproduct} is defined as $\Pi_{k\to\omega}M_{m_k}=l^\infty(\nz,M_{m_k})/\nr_\omega$, where $l^\infty(\nz,M_{m_k})=\{(x_k)_k\in\Pi_kM_{m_k}:sup_k||x_k||<\infty\}$ is the set of bounded sequences of matrices w.r.t the operator norm, and $\nr_\omega=\{(x_k)_k\in l^\infty(\nz,M_{m_k}):\lim_{k\to\omega}||x_k||_2=0\}$.
\end{de}

On the ultraproduct $\Pi_{k\to\omega}M_{m_k}$ define the trace $Tr(\Pi_{k\to\omega}x_k)=\lim_{k\to\omega}Tr(x_k)$, where $\Pi_{k\to\omega}x_k$ is the generic element in $\Pi_{k\to\omega}M_{m_k}$ with $x_k\in M_{m_k}$. The sets $D_m\subset M_m$ and $P_m\subset M_m$ are the \emph{subalgebra of diagonal matrices} and \emph{subgroup of permutation matrices} respectively. 

We now construct the algebra $M_f(E)$. As before $E\subset X^2$ is a countable measure preserving equivalence relation on a standard probability space $(X,\bb,\mu)$. The next definitions are from \cite{Fe-Mo}.

\begin{de}\label{finite function}
A measurable function $f:E\to\cz$ is called \emph{finite} if $f$ is bounded and there exists $n\in\nz$ such that $|\{z:f(x,z)\neq 0\}|\leqslant n$ and $|\{z:f(z,y)\neq 0\}|\leqslant n$ for $\mu$-almost any $x,y$.
\end{de}

\begin{p}
The set $M_f(E)=\{f:E\to\cz:f\mbox{ finite}\}$ is a *-algebra endowed with a trace. The operations are defined as follows:
\begin{align*}
(f+g)(x,y)=&f(x,y)+g(x,y);&
(f\cdot g)(x,y)=&\sum_zf(x,z)g(z,y);\\
f^*(x,y)=&\overline{f(y,x)};&
Tr(f)=&\int_xf(x,x)d\mu(x).
\end{align*}
\end{p}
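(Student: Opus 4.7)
The plan is to verify the axioms in the natural order: first check that each operation is well-defined and stays inside $M_f(E)$, then check the algebraic identities, then the involution identities, and finally the trace properties. The only real content is the measure-theoretic bookkeeping; the algebra itself is formal.

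First I would fix representatives $f,g\in M_f(E)$ with uniform row/column bounds $n_f,n_g$ and sup-norm bounds $M_f,M_g$, valid on a conull set in $X$. Sum and scalar multiplication are trivial: $f+g$ has at most $n_f+n_g$ nonzero entries in each row and column, and is bounded by $M_f+M_g$. The involution $f^*(x,y)=\overline{f(y,x)}$ swaps rows and columns, so it preserves the finite condition. For the product, the sum $(f\cdot g)(x,y)=\sum_z f(x,z)g(z,y)$ is a genuinely finite sum (at most $n_f$ nonzero terms) for $\mu$-almost every $x$, hence $|(fg)(x,y)|\leqslant n_fM_fM_g$. The nonzero entries of $(fg)(x,\cdot)$ lie in the set $\bigcup_{z:f(x,z)\neq0}\{y:g(z,y)\neq 0\}$, which has cardinality at most $n_fn_g$ almost everywhere; symmetrically for columns. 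Hence $fg\in M_f(E)$.

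Associativity, distributivity and the involutive identities $(f^{*})^{*}=f$, $(f+g)^{*}=f^{*}+g^{*}$, $(\lambda f)^{*}=\overline{\lambda}f^{*}$, $(fg)^{*}=g^{*}f^{*}$ reduce to formal manipulations of the finite sums defining the product, carried out on a single conull set obtained by intersecting countably many conull sets on which the finiteness conditions for $f$, $g$ and their finitely many relevant combinations hold. No issue of absolute convergence arises, precisely because the sums have uniformly boundedly many terms almost everywhere.

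For the trace, linearity in $f$ is immediate. The key identity $Tr(fg)=Tr(gf)$ is where the equality of the left and right counting measures on $E$ is used: writing
\[Tr(fg)=\int_X\sum_{z\sim x}f(x,z)g(z,x)\,d\mu(x)=\int_E f(x,z)g(z,x)\,d\nu(x,z),\]
and then using $flip$-invariance of $\nu$ (i.e.\ the equality of the left and right counting measures), we substitute $(x,z)\leftrightarrow(z,x)$ to get $\int_E g(z,x)f(x,z)\,d\nu = Tr(gf)$. Positivity follows similarly:
\[Tr(f^{*}f)=\int_X\sum_{z\sim x}\overline{f(z,x)}f(z,x)\,d\mu(x)=\int_E|f(z,x)|^{2}\,d\nu(x,z)\geqslant 0.\]

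The main (minor) obstacle is purely bookkeeping: one must ensure that all the almost-everywhere finiteness statements can be taken to hold simultaneously on a common conull set before manipulating pointwise sums, and that the swap $(x,z)\leftrightarrow(z,x)$ in $Tr(fg)$ is justified by the measure-preserving property of $E$ rather than by any Fubini-type argument. Everything else is formal.
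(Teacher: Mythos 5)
Your verification is correct and is the standard Feldman--Moore argument: the paper itself states this proposition without proof (deferring to \cite{Fe-Mo}), so there is nothing to contrast it with. Both the bookkeeping for closure under the product and the use of the flip-invariance of the counting measure $\nu$ for $Tr(fg)=Tr(gf)$ are exactly the right points to isolate, and your treatment of them is sound.
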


We need the equivalent in $M_f(E)$ of a diagonal and a permutation matrix. The algebra $L^\infty(X,\mu)$ canonically embeds in $(M_f(E),Tr)$ by $L^\infty(X,\mu)\ni a\to\tilde a\in M_f(E)$, where $\tilde a(x,y)=a(x)\delta_x^y$.
For $\vp\in[E]$ define $u_\vp=\chi(graph(\vp_i^{-1}))\in M_f(E)$. Then $u_\vp$ is a unitary in $M_f(E)$ and $[E]\ni\vp\to u_\vp\in M_f(E)$ is a group morphism.

We can now state de definition.

\begin{de}
The equivalence relation $E$ is \emph{sofic} if there exists a trace preserving embedding $\theta:M_f(E)\to\Pi_{k\to\omega}M_{m_k}$ such that $\theta(\tilde a)\in\Pi_{k\to\omega}D_{m_k}$ and $\theta(u_\vp)\in\Pi_{k\to\omega}P_{m_k}$ for each $a\in L^\infty(X,\mu)$ and $\vp\in [E]$.
\end{de}

Let $PP_m=\pp(D_m)\cdot P_m$, i.e. $PP_m$ is the set of permutations cut with a diagonal projection. It can be deduced, from the definition of sofic equivalence relation, that if $v\in[[E]]$ and $\theta:M_f(E)\to\Pi_{k\to\omega}M_{m_k}$ is a sofic embedding, then $\theta(v)\in\Pi_{k\to\omega}PP_{m_k}$.

\subsection{Sofic DSE} A doubly stochastic element and its associated matrix is the same information. Moreover, the associated matrix of a DSE is a finite function in the sense of Definition \ref{finite function}. Thus, by approximating the associated matrix of a DSE with finite matrices, we hope to derive some conclusions form the classic Birkhoff - von Neumann theorem.

\begin{de}
A DSE $\Phi=\{\vp_i:i\}$ is called \emph{sofic} if the orbit equivalence relation generated by the maps $(\vp_i)_i$ is sofic.
\end{de}

Our goal is to prove that the associated matrix of a sofic DSE can be approximated by doubly stochastic matrices. We use the following lemma. A proof can be found in \cite{Ar-Pa}, Lemma 6.3.

\begin{lemma}
Let $\{c_j\}_j\subseteq\pp(\Pi_{k\to\omega}D_{m_k})$ be a sequence of projections such that $\sum_jc^j=\id$. Then there exist projections $c_j^k\in \pp(D_{m_k})$ such that $c_j=\Pi_{k\to\omega}c_j^k$ and $\sum_jc_j^k=\id_{m_k}$ for each $k\in\nz$.
\end{lemma}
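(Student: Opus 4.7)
The plan is to lift each $c_j$ individually to a projection in $D_{m_k}$, and then repair both pairwise orthogonality and the summation constraint via a Gram--Schmidt-type construction in the commutative algebras $D_{m_k}$, combined with a diagonal argument over the ultrafilter to handle the countable family.

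First, I choose representatives $(\tilde c_j^k)_k$ with $\tilde c_j^k \in \pp(D_{m_k})$ and $c_j = \Pi_{k\to\omega}\tilde c_j^k$. Lifting to actual projections (not merely self-adjoints) is automatic because $D_{m_k}$ is commutative and finite-dimensional: one rounds spectral values to $\{0,1\}$ at negligible $\|\cdot\|_2$-cost along $\omega$. From $\sum_j c_j = \id$ in the abelian tracial algebra $\Pi_{k\to\omega}D_{m_k}$ one extracts pairwise orthogonality of the $c_j$'s (two projections $p,q$ in a commutative $C^*$-algebra with $p+q\le\id$ satisfy $pq=0$). Consequently $\|\tilde c_i^k\tilde c_j^k\|_2\to 0$ along $\omega$ for $i\neq j$, and $Tr\bigl(\bigvee_{j\le J}\tilde c_j^k\bigr)\to\sum_{j\le J}Tr(c_j)$ along $\omega$, a quantity which tends to $1$ as $J\to\infty$.

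For the diagonal choice, let $A_J\subseteq\nz$ be the set of $k$ such that $\|\tilde c_i^k\tilde c_j^k\|_2<1/J$ for all $i<j\le J$ and $Tr\bigl(\bigvee_{j\le J}\tilde c_j^k\bigr)>1-1/J$. Each $A_J$ lies in $\omega$; after replacing $A_J$ by $\bigcap_{J'\le J}A_{J'}$ we may assume $A_1\supseteq A_2\supseteq\cdots$. Set $J(k):=\max\{J\le k:k\in A_J\}$, with $J(k):=0$ if no such $J$ exists; then $J(k)\to\infty$ along $\omega$. For each $k$ and $j=1,\dots,J(k)$ orthogonalize inductively by
\[
c_j^k := \tilde c_j^k \wedge \Bigl(\id - \sum_{i<j} c_i^k\Bigr),
\]
the indicator in $D_{m_k}$ of the support of $\tilde c_j^k$ minus the supports already chosen; let $r^k:=\id-\sum_{j\le J(k)}c_j^k$, a projection of trace less than $1/J(k)$, hence $\|r^k\|_2\to 0$ along $\omega$. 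Absorb the residual by redefining $c_1^k\leftarrow c_1^k + r^k$ (still a projection, still orthogonal to the other $c_j^k$'s since $r^k\perp c_j^k$ by construction), and set $c_j^k:=0$ for $j>J(k)$. By construction $\sum_j c_j^k=\id_{m_k}$ for every $k$.

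Verification of $\Pi_{k\to\omega}c_j^k=c_j$ proceeds by induction on $j$. For $j=1$, $\|c_1^k-\tilde c_1^k\|_2=\|r^k\|_2\to 0$ along $\omega$. For $j\ge 2$ and $k\in A_j$,
\[
\|c_j^k-\tilde c_j^k\|_2 \le \sum_{i<j}\|\tilde c_j^k\,c_i^k\|_2 \le \sum_{i<j}\bigl(\|\tilde c_j^k\tilde c_i^k\|_2+\|c_i^k-\tilde c_i^k\|_2\bigr),
\]
and both summands vanish along $\omega$ by the inductive hypothesis and the defining conditions of $A_j$. The main obstacle is the diagonal choice of $J(k)$: since $\{c_j\}$ may be infinite while each $D_{m_k}$ is finite-dimensional, no single finite truncation works uniformly, and one must tune $J(k)\to_\omega\infty$ so that the orthogonalization errors and the total-trace deficit vanish along $\omega$ simultaneously. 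Once this ultrafilter-diagonal bookkeeping is in place, the remaining Gram--Schmidt step in the commutative algebras $D_{m_k}$ is elementary.
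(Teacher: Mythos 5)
The paper itself does not prove this lemma; it defers to \cite{Ar-Pa}, Lemma 6.3, whose argument is the same lift--disjointify--absorb scheme you use, so your route is essentially the standard one: lift each $c_j$ to genuine projections in $D_{m_k}$, make them exactly orthogonal by a Gram--Schmidt step in the commutative algebras, absorb the residual projection into $c_1^k$ to force the exact identity $\sum_j c_j^k=\id_{m_k}$, and use an ultrafilter-diagonal truncation $J(k)$ to handle a countably infinite family. There is, however, one concrete slip: the claim that each $A_J$ belongs to $\omega$ is false as stated, because the $\omega$-limit of $Tr\bigl(\bigvee_{j\le J}\tilde c_j^k\bigr)$ is $\sum_{j\le J}Tr(c_j)$ (as you yourself note), and this partial sum can fall below $1-1/J$ for all large $J$ --- for instance when $Tr(c_j)$ decays like $j^{-1-\varepsilon}$ with $\varepsilon$ small, so that the tail $\sum_{j>J}Tr(c_j)$ dominates $1/J$. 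The repair is immediate: define $A_J$ by the condition $Tr\bigl(\bigvee_{j\le J}\tilde c_j^k\bigr)>\sum_{j\le J}Tr(c_j)-1/J$ together with the orthogonality conditions; then $A_J\in\omega$ for every $J$, and the residual satisfies $Tr(r^k)\le 1-\sum_{j\le J(k)}Tr(c_j)+1/J(k)$, which still tends to $0$ along $\omega$ because $J(k)\to\infty$ along $\omega$ and $\sum_jTr(c_j)=1$. With that one change, the inductive verification of $\Pi_{k\to\omega}c_j^k=c_j$ and the exact summation identity go through exactly as you wrote them.
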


\begin{p}
Let $\Phi=\{\vp_i:i\}$ be a sofic DSE of multiplicity $n$, $E$ the equivalence relation generated and let $\theta:M_f(E)\to\Pi_{k\to\omega}D_{m_k}$ be a sofic embedding. Then there exists $x_k\in B_{m_k}^n$ such that $\theta(M(\Phi))=\Pi_{k\to\omega}x_k$.
\end{p}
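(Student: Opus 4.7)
The idea is to realize $M(\Phi)$ as a sum of partial isometries in $M_f(E)$, lift each through the sofic embedding to a partial-permutation matrix, and then correct the resulting sum so that row and column sums become exactly $n$. Write $M(\Phi)=\sum_i v_i$ with $v_i:=\chi(graph(\vp_i))\in M_f(E)$; these satisfy $v_iv_i^*=\chi_{A_i}$ and $v_i^*v_i=\chi_{B_i}$ (viewed as diagonal elements), and the multiplicity-$n$ hypothesis reads $\sum_iv_iv_i^*=n\cdot\id=\sum_iv_i^*v_i$. Since each $\vp_i\in[[E]]$, the observation following the definition of sofic equivalence relation gives $\theta(v_i)\in\Pi_{k\to\omega}PP_{m_k}$, so we fix representatives $\theta(v_i)=\Pi_{k\to\omega}p_i^k$ with $p_i^k\in PP_{m_k}$. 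The first attempt $y_k:=\sum_i p_i^k\in M_{m_k}(\nz)$ already satisfies $\Pi_{k\to\omega}y_k=\theta(M(\Phi))$; the diagonal $\sum_i p_i^k(p_i^k)^*$ of row sums of $y_k$ represents $n\cdot\id$ in the ultraproduct, so $\|\sum_i p_i^k(p_i^k)^*-n\cdot\id_{m_k}\|_2\to 0$ along $\omega$, and analogously for columns. Thus $y_k$ is close to, but typically not in, $B_{m_k}^n$.

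To pin down the row sums exactly, decompose each $A_i=\bigsqcup_{\alpha=1}^n A_i^{\alpha}$ so that $\{A_i^{\alpha}\}_i$ is a partition of $X$ for each $\alpha$; such an $n$-coloring exists because every $x\in X$ lies in exactly $n$ of the $A_i$. Let $v_i^{\alpha}$ be the restriction of $v_i$ to $A_i^{\alpha}$, so $v_i=\sum_{\alpha}v_i^{\alpha}$. For each fixed $\alpha$, the diagonal projections $\{\theta(\chi_{A_i^{\alpha}})\}_i$ form a partition of $\id$ in $\Pi_{k\to\omega}D_{m_k}$, so the cited lemma supplies projections $e_{i,\alpha}^k\in\pp(D_{m_k})$ with $\sum_i e_{i,\alpha}^k=\id_{m_k}$ and $\Pi_{k\to\omega}e_{i,\alpha}^k=\theta(\chi_{A_i^{\alpha}})$. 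Starting from any $PP_{m_k}$-lift of $\theta(v_i^{\alpha})$ --- whose row-support projection is $\|\cdot\|_2$-close to $e_{i,\alpha}^k$ along $\omega$ --- perform a local modification, deleting entries in rows outside the support of $e_{i,\alpha}^k$ and inserting new ones at the missing rows into currently unused columns, to obtain $p_{i,\alpha}^k\in PP_{m_k}$ with $p_{i,\alpha}^k(p_{i,\alpha}^k)^*=e_{i,\alpha}^k$ and the same ultraproduct value. Redefining $y_k:=\sum_{i,\alpha}p_{i,\alpha}^k$, every row sum of $y_k$ is now exactly $n$ for each $k$, because for each $\alpha$ the row-supports $\{e_{i,\alpha}^k\}_i$ tile $\{1,\dots,m_k\}$.

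The column sums of this new $y_k$ still satisfy $\|\sum_{i,\alpha}(p_{i,\alpha}^k)^*p_{i,\alpha}^k-n\cdot\id_{m_k}\|_2\to 0$, and since the total number of entries of $y_k$ is exactly $nm_k$, the total column excess matches the total column deficit. A row-preserving local swap procedure --- for each column $c^+$ with excess and $c^-$ with deficit, pick a row $r$ with $y_k(r,c^+)\geq 1$ and replace $y_k(r,c^+)$ and $y_k(r,c^-)$ by $y_k(r,c^+)-1$ and $y_k(r,c^-)+1$ --- requires at most $\tfrac12\sum_c|\text{colsum}(c)-n|=o(m_k)$ steps (by Cauchy--Schwarz applied to the 2-norm estimate) and produces $x_k\in B_{m_k}^n$ with $\|y_k-x_k\|_2\to 0$, hence $\Pi_{k\to\omega}x_k=\theta(M(\Phi))$. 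The delicate point is the row-support correction in the previous paragraph: one must verify that the entries to be inserted can be placed in distinct free columns without destroying the partial-permutation property. This is feasible because the symmetric difference between the row-support of the generic lift and the target projection $e_{i,\alpha}^k$ has $o(m_k)$ size along $\omega$, which leaves abundantly many free columns at each $k$ and simultaneously renders the perturbation negligible in $\|\cdot\|_2$, so the ultraproduct representative of $\theta(v_i^{\alpha})$ is preserved.
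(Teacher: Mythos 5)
Your argument is correct, and its skeleton is the one the paper uses: lift the partial isometries $\chi(graph(\vp_i))$ through $\theta$ to partial permutation matrices, invoke the lemma on lifting partitions of unity in $\Pi_{k\to\omega}D_{m_k}$ to exact partitions of $\id_{m_k}$, form $y_k$ as the sum of the (adjusted) lifts, and finish with a correction of vanishing $\|\cdot\|_2$-norm. Where you diverge is in how the lifts are adjusted and how the correction is organized. The paper applies the lemma only twice, to the partitions of $X$ generated by $\{A_i\}$ and by $\{B_i\}$, and then compresses each lift on \emph{both} sides by the resulting diagonal projections, i.e. sets $w_i^k=(\sum_{j\in T_i}d_j^k)v_i^k(\sum_{j\in S_i}c_j^k)$; since each atom of either partition occurs in exactly $n$ of the $S_i$ (resp. $T_i$), this forces every row and column sum of $y_k$ to be $\leqslant n$, and the deficit matrix $z_k$ needed to complete $y_k$ to an element of $B_{m_k}^n$ has total mass $m_k(n-t_k)=o(m_k)$, hence $\|z_k\|_2\to0$. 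You instead apply the lemma $n$ times, once per colour class of an $n$-colouring $A_i=\bigsqcup_\alpha A_i^\alpha$, and perform combinatorial surgery on each lift to make its row support \emph{exactly} the prescribed projection, so that row sums come out exactly $n$; the columns are then repaired by $o(m_k)$ within-row swaps, justified by Cauchy--Schwarz. Both routes are sound: the paper's two-sided compression is algebraically cleaner (no hand modification of partial permutations is needed, only the easy fact that a nonnegative integer matrix with all line sums $\leqslant n$ completes to one with line sums $=n$), while your version trades that completion step for the explicit free-column argument in the row-support surgery, which you do address correctly (the symmetric difference of supports has $o(m_k)$ rows, and there are always at least as many unused columns as rows awaiting an entry).
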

\begin{proof}
Consider $\vp_i:A_i\to B_i$, $i=1,\ldots,r$. Define $v_i=\chi(graph(\vp_i^{-1}))\in M_f(E)$. Then, inside $M_f(E)$, we have $v_i^*v_i=\tilde\chi(A_i)$ and $v_iv_i^*=\tilde\chi(B_i)$.

Let $\{C_j\}_{j=1}^s$ be the partition of $X$ generated by sets $(A_i)_i$. Then, for each $i=1,\ldots,r$, $A_i=\sqcup_{j\in S_i}C_j$ for some $S_i\subset\{1,\ldots,s\}$. Because $\Phi$ is a DSE of multiplicity $n$, each $j=1,\ldots,s$ belongs to exactly $n$ sets from the colection $S_1,\ldots,S_r$. Similarly let $\{D_j\}_j$ be the partition generated by sets $\{B_i\}_i$ such that $B_i=\sqcup_{j\in T_i}D_j$. Routine partial isometry computations show that $\tilde\chi(B_i)\cdot v_i\cdot\tilde\chi(A_i)=v_i$.

Let $c_j=\tilde\chi(C_j)\in M_f(E)$. Then $\sum_{j=1}^sc_j=\id$. Using the previous lemma, we find $c_j^k\in \pp(D_{m_k})$ such that $\theta(c_j)=\Pi_{k\to\omega}c_j^k$ and $\sum_jc_j^k=\id_{m_k}$ for each $k\in\nz$. We construct $d_j\in M_f(E)$ and $d_j^k\in \pp(D_{m_k})$ in a similar way. Notice that $\sum_i\sum_{j\in S_i}c_j^k=n\sum_jc_j^k=n\cdot\id$.

Construct $v_i^k\in PP_{m_k}$, $\theta(v_i)=\Pi_{k\to\omega}v_i^k$. Define $w_i^k=(\sum_{j\in T_i}d_j^k)v_i^k(\sum_{j\in S_i}c_j^k)$. Then $\Pi_{k\to\omega}w_i^k=(\sum_{j\in T_i}\theta(d_j))\cdot\theta(v_i)\cdot(\sum_ {j\in S_i}\theta(c_j))=\theta(\tilde\chi(B_i))\cdot\theta(v_i)\cdot\theta(\tilde\chi(A_i))=\theta(v_i)$. Additionally $(w_i^k)^*w_i^k\leqslant\sum_{j\in S_i}c_j^k$ and $w_i^k(w_i^k)^*\leqslant \sum_{j\in T_i}d_j^k$.  

Let $y_k=\sum_i w_i^k\in M_{m_k}(\nz)$. Then $\Pi_{k\to\omega}y_k=\theta(M(\Phi))$. As $(w_i^k)^*w_i^k\leqslant\sum_{j\in S_i}c_j^k$ and $\sum_i\sum_{j\in S_i}c_j^k=n\cdot\id$, it follows that the sum of all entries in $y_k$ on each column is less than $n$. The same statement is valid for the sum of entries on each row. In order to finish the proof, we show that it is possible to increase some entries in $y_k$ without changing the value of $\Pi_{k\to\omega}y_k$.

Let $t_k$ be the sum of entries of $y_k$ divided by $m_k$. Clearly $t_k\leqslant n$. As $w_i^k\in PP_{m_k}$ the sum of entries divided by $m_k$ for this matrix is $Tr((w_i^k)^*w_i^k)$. Then $t_k=\sum_iTr((w_i^k)^*w_i^k)$ and $\lim_{k\to\omega}t_k=\sum_iTr(\theta(v_i)^*\theta(v_i))=\sum_i\mu(A_i)=n$.

Construct a matrix $z_k\in M_{m_k}(\nz)$ such that $y_k+z_k\in B_{m_k}^n$. The sum of entries in $z_k$ divided by $m_k$ is $n-t_k$ and each entry is smaller than $n$. Then $||z_k||_2^2\leqslant n^2(n-t_k)\to_{k\to\omega}0$. It follows that $\Pi_{k\to\omega}z_k=0$ so $x_k=y_k+z_k$ are the required matrices.
\end{proof}

\begin{te}
Let $\Phi=\{\vp_i:i\}$ be a sofic DSE of multiplicity $n$, $E$ the equivalence relation generated and let $\theta:M_f(E)\to\Pi_{k\to\omega}D_{m_k}$ be a sofic embedding. Then there exists $p_1,\ldots,p_n\in\Pi_{k\to\omega}P_{m_k}$ such that $\theta(M(\Phi))=\sum_{i=1}^np_i$.
\end{te}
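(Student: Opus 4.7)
The plan is to leverage the previous proposition together with the classical Birkhoff--von Neumann theorem (in its $B_m^n$-formulation recalled in the introduction) in a purely term-by-term fashion along the ultrafilter.

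First, I would apply the preceding proposition to produce a sequence of matrices $x_k\in B_{m_k}^n$, i.e.\ nonnegative integer matrices with all row and column sums equal to $n$, such that $\theta(M(\Phi))=\Pi_{k\to\omega}x_k$. This reduces the entire problem to a statement about each $x_k$ individually in finite dimensions.

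Next, for each fixed $k$, the classical Birkhoff--von Neumann theorem, as stated at the beginning of the paper, asserts exactly that any element of $B_{m_k}^n$ is a sum of $n$ permutation matrices. Hence I can pick $p_1^k,\ldots,p_n^k\in P_{m_k}$ with $x_k=\sum_{i=1}^n p_i^k$. Since each $p_i^k$ is a permutation matrix its operator norm is $1$, so the sequence $(p_i^k)_k$ lies in $l^\infty(\nz,M_{m_k})$ and defines an element $p_i:=\Pi_{k\to\omega}p_i^k\in\Pi_{k\to\omega}P_{m_k}$ for each $i=1,\ldots,n$.

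Finally, since the quotient map to the ultraproduct is a $*$-algebra homomorphism, it commutes with finite sums, so
\[\sum_{i=1}^n p_i \;=\; \sum_{i=1}^n \Pi_{k\to\omega}p_i^k \;=\; \Pi_{k\to\omega}\sum_{i=1}^n p_i^k \;=\; \Pi_{k\to\omega}x_k \;=\; \theta(M(\Phi)),\]
which is the desired decomposition. There is essentially no obstacle here: all the genuine work has been done in the preceding proposition (transferring $\theta(M(\Phi))$ into a coherent sequence of matrices in $B_{m_k}^n$), and the classical Birkhoff--von Neumann theorem handles each slice. If anything, the only point worth checking is the boundedness in operator norm of the chosen lifts $p_i^k$, which is automatic since permutation matrices are unitary.
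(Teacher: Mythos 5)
Your proof is correct and is exactly the argument the paper intends: the paper's own proof is the single sentence ``This follows from the previous proposition and the classic Birkhoff--von Neumann theorem,'' and you have simply filled in the routine details (term-by-term decomposition $x_k=\sum_{i=1}^n p_i^k$ in $B_{m_k}^n$, boundedness of the permutation lifts, and linearity of the quotient map onto the ultraproduct).
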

\begin{proof}
This follows from the previous proposition and the classic Birkhoff - von Neumann theorem.
\end{proof}

If $p_1,\ldots,p_n$ from the theorem are elements in $\theta(M_f(E))$ then $\Phi$ is decomposable.
Otherwise, consider the von Neumann algebra $A$ generated by $\theta(M_f(E))$ and $p_1,\ldots,p_n$. Then $A\cap\Pi_{k\to\omega}D_{m_k}$ is abelian, so that $(A\cap\Pi_{k\to\omega}D_{m_k})\sim L^\infty(Y)$. Now $p_1,\ldots,p_n$ act on $L^\infty(Y)\supset\theta(L^\infty(X))$. It follows that the original DSE $\Phi$ can be amplified to a decomposable DSE on the space $Y$.

\subsection{Example} We present here a decomposable amplification of Example \ref{counterexample}. This example is composed of partial isomorphisms $\Phi=\{\vp_1,\vp_2,\psi_n^1,\psi_n^2\}$ of the unit interval.

Let $I$ denote the unit interval. The new DSE, $\tilde\Phi$, is constructed on $I\times\{1,2\}$, and it is composed of the maps:
\begin{align*}
\tilde\vp_{1,1}(x,1)&=(\vp_1(x),1);&\ \tilde\psi_n^{1,1}(x,1)&=(\psi_n^1(x),1)\\
\tilde\vp_{1,2}(x,2)&=(\vp_1(x),2);&\ \tilde\psi_n^{1,2}(x,2)&=(\psi_n^1(x),2)\\
\tilde\vp_{2,1}(x,1)&=(\vp_2(x),2);&\ \tilde\psi_n^{2,1}(x,1)&=(\psi_n^2(x),2)\\
\tilde\vp_{2,2}(x,2)&=(\vp_2(x),1);&\ \tilde\psi_n^{2,2}(x,2)&=(\psi_n^2(x),1)\\
\end{align*}
Define on $I\times\{1,2\}$ the equivalence relation $(x,1)\sim(x,2)$ for any $x\in I$. Then, under this equivalence relation, $\tilde\Phi$ collapses to the old $\Phi$. Moreover $\tilde\Phi$ can be decomposes in two isomorphisms of $I\times\{1,2\}$ by pasting pieces $\{\tilde\vp_{1,2},\tilde\vp_{2,1},\tilde\psi_n^{1,1},\tilde\psi_n^{2,2}\}$ into one isomorphism and pieces $\{\tilde\vp_{1,1},\tilde\vp_{2,2},\tilde\psi_n^{1,2},\tilde\psi_n^{2,1}\}$ into the other.

\section{Application to Hecke operators}

Consider a countable discrete group $G$ acting ergodicaly and a.e. free, by measure preserving transformations on an infinite measure space $(X,\mu)$, with $\sigma$-finite measure $\mu$. Let  $\Gamma \subseteq G$ be an almost normal subgroup. By definition, a subgroup is \emph{almost normal} if for all $g\in G$ the group $\Gamma_g= \Gamma\cap g\Gamma g^{-1}$ has finite index in $\Gamma$. Assume that the restriction of the action $G\curvearrowright (X,\mu)$ to $\Gamma$ admits a finite measure, fundamental domain $F\subseteq X $.  We consider the countable,  measurable equivalence relation $\mathcal R_G$ on $X$ induced by the orbits of $G$, and let $\mathcal R_G| F$ be its restriction to $F$ (thus two points in $F$ are equivalent if and only if they are on the same orbit of $G$).



For $g$ in $G$, we introduce  
$\dot{\wideparen{\Gamma g}}$, a function mapping  $F$ with values in $F$, constructed  as follows:
Let $x$ be an element  in $F$. Since $F$ is a fundamental domain, there exists
a unique $\gamma_1\in \Gamma$ and $x_1$ in $F$ such that $gx=\gamma_1x_1$. Then we define:
\begin{equation}\label{gammag}
\dot{\wideparen{\Gamma g}} (x):=x_1=\gamma_1^{-1}gx.
\end{equation}
\noindent Clearly, the function $\dot{\wideparen{\Gamma g}}$
depends only on the left $\Gamma$-coset $\Gamma g$, for all  $g\in G$. 

Then $\cR_G|_F$ is generated by the transformations $\dot{\wideparen{\Gamma g}}$, $g$ running 
through a system of representatives for left cosets of $\Gamma$. Indeed, the above definition implies that $x\sim y$ with respect to $\cR_G|_F$  if  and only if there exists $g\in G$ such that
$\dot{\wideparen{\Gamma g}} x=y$.

Let $\Gamma g \Gamma$ be the double coset associated  to $g$. Assume that $(r_j)_{j=1,\dots,[\Gamma:\Gamma_g]}$  are a system of left  coset representatives for $\Gamma_{g^{-1}}$ in $\Gamma$. Thus, $\Gamma$ is the disjoint reunion of $\Gamma_{g^{-1}}\cdot r_j$,  $j=1,\dots,[\Gamma:\Gamma_g]$.    This is equivalent to the fact  that  $\Gamma g \Gamma$ is  a finite reunion of right cosets of $\Gamma$:
$$\Gamma g \Gamma=\mathop \bigcup_{j=1}^{[\Gamma:\Gamma_g]} \Gamma g r_j.$$

Let $T_{\Gamma g \Gamma}:L^2(F,\mu)\to L^2(F,\mu)$ be the Hecke operator associated to the double coset
$\Gamma g \Gamma$ 
(see e.g. \cite {Krieg}).
Then
\begin{equation}\label{hecke}
T_{\Gamma g \Gamma}=\mathop\sum_{j=1}^{[\Gamma:\Gamma_g]} \dot{\wideparen{\Gamma gr_j}}. 
\end{equation}
 Assume that  $$[\Gamma:\Gamma_g]=[\Gamma:\Gamma_{g^{-1}}]$$for all $g\in G$. 
Recall (\cite {Krieg}) that in this case $$\Gamma g\Gamma=\Gamma g^{-1}\Gamma, \ g \in G.$$

It is  proven in \cite {Ra} that there exists a finite measurable partition of $F$, consisting of  sets $(A^g_{r_j})_{{j=1},\dots, {[\Gamma:\Gamma_g]}} $, such that the restriction of $\dot{\wideparen{\Gamma g}} $  to each of the sets $A^g_{r_j}$, ${j=1,\dots, [\Gamma:\Gamma_g]}$, is injective. These restrictions do not necessary have disjoint images.

 Also, in the paper cited above, it is proved that 
the inverse of  the transformation $\dot{\wideparen{\Gamma g}}$, when restricted to a domain of injectivity as above,
is the restriction to an analogous  injectivity domain of the function $\dot{\wideparen{\Gamma h}}$, for a left $\Gamma$-coset  $\Gamma h$,  contained in $\Gamma g\Gamma$. We also assume that none of the transformations above is equal to its own inverse. This is a restriction imposed on the Hecke algebra of double cosets and it amounts to the fact that for all $g\in G\setminus \{e\}$, the cosets $\Gamma g$ and $\Gamma g^{-1}$ are distinct. This condition is verified in the example we are considering below (\cite{Krieg}).

 The  formula (\ref{hecke}) implies that, although  the  function $\dot{\wideparen{\Gamma g}}$ is not injective,  the cardinality of the   set 
 $$\{ \dot{\wideparen{\Gamma g r_j}}(f) \mid j=1,\dots, [\Gamma:\Gamma_g] \},$$
 for $f$ in $F$, is constantly equal to $[\Gamma:\Gamma_g]$. Indeed, the points enumerated in the set above are the Hecke points corresponding to $f\in F$. Because $G$ acts freely a. e. they are a.e.
distinct (\cite{COU}). The same is true for the set of preimages.

Consider  the finite set of partial transformations of $F$, denoted by  $\mathcal D_{\Gamma g\Gamma}$, consisting of the restrictions of the functions   $\dot{\wideparen{\Gamma gr_j}} $ to domains  of injectivity as above. Then $\mathcal D_{\Gamma g\Gamma}$
is  a symmetric DSE of order $[\Gamma:\Gamma_g]$. 
Because of formula (\ref{hecke}) we obtain  that 
\begin{equation}
T_{\Gamma g \Gamma}= \mathop\sum_{t\in \mathcal D_{\Gamma g\Gamma}} \ t.
\end{equation}

In the case $G = \PGL_2(\Z[\frac1p])$, $\Gamma=\PSL_2(\Z)$,  $p\geq 3$, a prime number,  the relation $\mathcal R_G| F$ is the equivalence relation associated to a free, measure preserving  action, on $F$, of a free group with $(p+1)/2$ generators. Indeed let 
$$\sigma_{p^n}=\left(
\begin{matrix}
p^n &0\\0&1
\end{matrix}\right), n \in \mathbb N.$$
\noindent 
 For cosets $\Gamma g_1, \Gamma g_2,\dots, \Gamma g_n$ contained in $\Gamma\sigma_p\Gamma$,  any relation of the form $$\dot{\wideparen{\Gamma g_1}}\;\dot{\wideparen{\Gamma g_2}}\ldots\dot{\wideparen{\Gamma g_n}}f=f,$$
for  $f\in F$, is possible if and only if each  the factors  $\dot{\wideparen{\Gamma g_i}}$
is canceled by its  inverse. Hence, since $[\Gamma: \Gamma_{\sigma_p}]= \frac{p+1}{2}$, the equivalence relation
$\cR_G|F$ is treeable, of cost $\frac{p+1}{2}$. The graphing of this equivalence relation consist of the partial  transformations in the set $\mathcal D_{\Gamma \sigma_p\Gamma}$.   

By Hjorth theorem ([Hj]), there exists a free group factor $F_{\frac{p+1}{2}}$ acting
freely on $F$, whose orbits are the equivalence relation in $\cR_G|F$.

Because of the Theorems \ref{th:decDSE} and \ref{th:symmDSE} on symmetric DSE, we can arrange that that generators of $F_{\frac{p+1}{2}}$ are built, for every $\varepsilon >0$,  up to a subset $ F_\varepsilon \subseteq F$ of  measure less than $\varepsilon$, 
from restrictions to smaller domains of the transformations of $\dot{\wideparen{\Gamma g}}$, $\Gamma g \subseteq \Gamma \sigma_p \Gamma$, glued together into injective  transformations defined  on $L^2(F\setminus F_\varepsilon, \nu)$.    All the elements in $\mathcal D_{\Gamma \sigma_p\Gamma}$ are used exactly once in this process (up to a subset of measure less that $\varepsilon$.
 
Recall that   the radial elements in group algebra of the group $F_{\frac{p+1}{2}}$ are  the selfadjoint elements $\chi_n\in \mathbb C(F_{\frac{p+1}{2}})$ equal to the sum of words, in 
the generators of $F_{\frac{p+1}{2}}$,   of length $n$, $n\in \N$. The above argument shows that the image, through the Koopman unitary representation of   $\chi_n$, restricted to $L^2(F\setminus F_\varepsilon,\nu )$ with values in $L^2(F,\nu )$, coincides with the restriction of the Hecke operator $T_{\Gamma \sigma_{p}\Gamma }\mid_{L^2(F\setminus F_\varepsilon,\nu)}$.


Consequently the spectral gap behavior of the Hecke operators associated to the action of $G$ on $X$ is similar to the spectral gap phenomena considered in the paper
\cite{LPS}.


\begin{thebibliography}{23}
\bibitem[Ar-Pa]{Ar-Pa} G. Arzhantseva - L. P\u aunescu, \emph{Almost commuting permutations are near commuting permutations}, J. Funct. Anal.(2015), Vol 269, Issue 3, 2015, Pages 745-757.
\bibitem [Br]{Br} J.R. Brown, \emph{Approximation theorems for Markov operators}, Pacific J. Math. 16, 13-23, (1966).
\bibitem [COU]{COU} L. Clozel, H. Oh, E. Ullmo, \emph{Hecke operators and equidistribution of Hecke points}, Inventiones mathematicae, May 2001, Volume 144, Issue 2, pp 327-351.
\bibitem [Fe-Mo]{Fe-Mo} J. Feldman - C. C. Moore, \emph{Ergodic equivalence relations, cohomology, and von Neumann algebras}, II. Trans. Amer. Math. Soc. 234 (1977) no. 2, 325-359.
\bibitem [Ga]{Ga} D. Gaboriau, \emph{Cout des relations d'équivalence et des groupes}, Invent. Math., 139 (2000), no. 1, 41-98.
\bibitem[Hj]{Hj}  G. Hjorth, \emph{A lemma for cost attained}, Ann. Pure Appl. Logic 143(1-3): 87-102 (2006).
\bibitem[Is]{Is} J.R. Isbell, \emph{Birkhoff's problem 111}, Proc. Amer. Math. Soc. 6 (1955), 217-218.
\bibitem [Ke]{Ke} Kechris, \emph{Classical Descriptive Set Theory}, Springer-Verlag, New York (1995).
\bibitem [Ke-Mi]{Ke-Mi} A. Kechris - B. Miller, \emph{Topics in orbit equivalence}, Lecture Notes in
Mathematics 1852 Springer Verlag.
\bibitem[Kr]{Krieg} A. Krieg, \emph{Hecke algebras},  Mem. Amer. Math. Soc.  {\bf 87}  (1990),  no. 435.
\bibitem [Li]{Li} J. Lindenstrauss, \emph{A remark onextreme doubly stochastic measures}, Amer. Math. Monthly, 72, 379-382 (1965).
\bibitem[Lo]{Lo} V. Losert, \emph{Counter-examples to some conjectures about doubly stochastic measures}, Pacific J. Math. 99 (1982), no. 2, 387-397.
\bibitem[LPS]{LPS} A. Lubotzky, R. Phillips, P. Sarnak, \emph{Hecke operators and distributing points on S2. II}, Comm. Pure Appl. Math. 40 (1987), no. 4, 401-420.
\bibitem[Pa]{Pa1} L. P\u aunescu, \emph{On Sofic Actions and Equivalence Relations};
J. Funct. Anal. 261(9)  (2011), 2461-2485.
\bibitem[Se-Sh]{Se-Sh} T.L. Seethoff - R.C. Shiflett, \emph{Doubly stochastic measures with prescribed support}, Z. Wahrsch. Verw. Gebiete, 41, 283-288 (1978).
\bibitem[Ra]{Ra} F. R\u adulescu, \emph{On the Countable Measure-Preserving Relation
Induced on a Homogeneous Quotient by the Action
of a Discrete Group}, Complex Anal. Oper. Theory, Springer Verlag, accepted 2014, DOI 10.1007/s11785-014-0426-7.
\end{thebibliography}
\end{document}